\newcommand{\p}{^}
\newcommand{\X}[1]{X\p{#1}}
\newcommand{\pb}[2]{
	\ensuremath{\langle #1,#2 \rangle}}
\newcommand{\wt}{\widetilde}
	\newcommand{\rmd}{\mathrm{d}}
\newcommand{\rme}{\mathrm{e}}
\newcommand{\rmi}{\mathrm{i}}
\newcommand{\M}{\mathrm{M}}
\newcommand{\Lrm}{\mathrm{L}}
\newcommand{\Ws}{\mathrm{W}\p\sig}
\newcommand{\Var}{\mathrm{Var}}
\newcommand{\Hlocz}{\Hscr_{0,\mathrm{loc}}}
\newcommand{\Ascr}{\mathscr{A}}
\newcommand{\Bscr}{\mathscr{B}}
\newcommand{\Cscr}{\mathscr{C}}
\newcommand{\Escr}{\mathscr{E}}
\newcommand{\Fscr}{\mathscr{F}}
\newcommand{\Gscr}{\mathscr{G}}
\newcommand{\Hscr}{\mathscr{H}}
\newcommand{\Kscr}{\mathscr{K}}
\newcommand{\Pscr}{\mathscr{P}}
\newcommand{\Tscr}{\mathscr{T}}
\newcommand{\Xscr}{\mathscr{X}}
\newcommand{\Hloc}{\Hscr_{\mathrm{loc}}}
\newcommand{\zt}{\zeta}
\newcommand{\et}{\eta}
\newcommand{\bt}{\beta}
\newcommand{\ep}{\varepsilon}
\newcommand{\lm}{\lambda}
\newcommand{\Lm}{\Lambda}
\newcommand{\sig}{\sigma}
\newcommand{\om}{\omega}
\newcommand{\Om}{\Omega}
\newcommand{\cadlag}{c\`adl\`ag\ }
\newcommand{\Ebb}{\mathbb{E}}
\newcommand{\Fbb}{\mathbb{F}}
\newcommand{\Nbb}{\mathbb{N}}
\newcommand{\Pbb}{\mathbb{P}}
\newcommand{\Rbb}{\mathbb{R}}
\newcommand{\Hbb}{\mathbb{H}}
\newcommand{\Gbb}{\mathbb{G}}
\newcommand{\aPP}[2]{\ensuremath{\langle #1,#2 \rangle}}
\newtheorem{theorem}{Theorem}[section]
\newtheorem{lemma}[theorem]{Lemma}
\newtheorem{proposition}[theorem]{Proposition}
\newtheorem{corollary}[theorem]{Corollary}
\theoremstyle{definition}
\newtheorem{definition}[theorem]{Definition}
\newtheorem{example}[theorem]{Example}
\newtheorem{assumption}[theorem]{Assumption}
\newtheorem{remark}[theorem]{Remark}
\numberwithin{equation}{section}
\title{Martingale Representation in Progressively Enlarged L\'evy Filtrations}
\author{Paolo Di Tella$^{1,2}$ and Hans-J\"urgen Engelbert $^3$}
\date{}
\begin{document}

%
%
%
%
%

\maketitle
\begin{abstract}{In this paper we obtain a martingale representation theorem in the progressive enlargement $\Gbb$ by a random time $\tau$ of the filtration $\Fbb\p L$ generated by a L\'evy process $L$. The assumptions on the random time are that $\Fbb\p L$ is immersed in $\Gbb$ and that $\tau$ avoids $\Fbb\p L$ stopping times. We also study the multiplicity of a progressively enlarged filtration.}
 \end{abstract}
{\noindent
\footnotetext[1]{Corresponding author.}
\footnotetext[2]{ \emph{Institution:} Inst.\ f\"ur Mathematische Stochastik, TU Dresden, Germany. 

\emph{E-Mail: }{\tt Paolo.Di\_Tella{\rm@}tu-dresden.de}}

\footnotetext[3]{ \emph{Institution:} Department of Mathematics, FSU Jena, Germany.

\emph{E-Mail: }{\tt hans-juergen.engelbert{\rm@}uni-jena.de}}
}

{\noindent \textit{Keywords:}  L\'evy processes, predictable representation property, multiplicity of a filtration, progressive enlargement of filtrations.
}
\\[.5cm]
{\noindent \textit{AMS-Code:}  60H05, 60G46, 60G51; 60H30, 60G44
}

\section{Introduction}\label{sec:intro}
Let $L$ be a L\'evy process and $\Fbb\p L$ the filtration generated by $L$. In this paper, we study the propagation of the predictable representation property (from now on PRP), obtained in \cite[Section 4.2]{DTE15} with respect to the filtration $\Fbb\p L$, to the progressive enlargement $\Gbb$ of $\Fbb\p L$ by a random time $\tau$. The only conditions on the random time $\tau$ we require are that the filtration $\Fbb\p L$ is immersed in $\Gbb$ (also referred to as \emph{hypothesis} $(\Hscr)$) and that $\tau$ avoids $\Fbb\p L$-stopping times (also referred to as \emph{hypothesis} $(\Ascr)$) (see Definition \ref{def.av.im} below). 

The most commonly used definition of the PRP with respect to a filtration $\Fbb$ is: An $\Fbb$-local martingale $X$ possesses the PRP with respect to $\Fbb$ if every $\Fbb$-local martingale can be represented as \emph{stochastic integral} of a \emph{predictable integrand} with respect to $X$. A classical example of a martingale with the PRP is a Brownian motion with respect to its natural filtration.

A first study about the propagation of the PRP to progressively enlarged filtrations is Kusuoka \cite{Ku99}. In \cite{Ku99}, Kusuoka considered a Brownian motion $(W,\Fbb\p W)$ and a random time $\tau$ satisfying hypothesis $(\Hscr)$. Moreover, the $\Gbb$-predictable compensator $\Lm\p\Gbb$ of $\tau$ is assumed to be \emph{absolutely continuous} with respect to the Lebesgue measure. As we shall see below (see Proposition \ref{prop:avoid.bf}), in the Brownian setting, a random time $\tau$ satisfies hypothesis $(\Ascr)$ if and only if $\Lm\p\Gbb$ is continuous. Therefore, our main result, Theorem \ref{thm:PRP.den.hp} below, generalises the result of Kusuoka not only to arbitrary L\'evy processes, but also in the Brownian setting itself, since we only assume the continuity (and not the absolute continuity) of $\Lm\p\Gbb$. 

In Section \ref{subs:sp.c.bf}, our weaker assumptions on $\tau$ allow us to study  the \emph{multiplicity} in the sense of Davis and Varaiya \cite{DV74} (see Definition \ref{def:mult}) of the enlarged Brownian filtration. More precisely, in Theorem \ref{thm:mul.G} we show that, if $\Lm\p\Gbb$ is continuous and singular with respect to the Lebesgue measure (from now on \emph{singular continuous}), then the $\Gbb$-martingale $Z=W+M$, where $M:=1_{[\tau,+\infty)}-\Lm\p\Gbb$, has the PRP with respect to  $\Gbb$. Hence, the multiplicity of $\Gbb$ is equal to one as for $\Fbb\p W$, although $\Gbb$ supports the orthogonal martingales $W$ and $M$ such that $\{W,M\}$ possesses the PRP with respect to $\Gbb$ (see Definition \ref{def:prp} and Theorem \ref{thm:PRP.den.hp}), and, moreover, although $W$ is continuous and $M$ is purely discontinuous. Clearly, this further study of the multiplicity of $\Gbb$ was not possible in \cite{Ku99}, where $\Lm\p\Gbb$ has a density with respect to the Lebesgue measure. 

Our results in Section \ref{subs:sp.c.bf} also generalize previous studies about the multiplicity of an enlarged filtration as the paper \cite{WG82} by Wu and Gang (see, in particular, \cite[Theorem 3]{WG82}) and the work-in-progress by Calzolari and Torti \cite{CT17} (see Section 3 therein).
Indeed, in \cite{WG82} the case of the independent enlargement is considered while in \cite{CT17} the independence is assumed under an equivalent probability measure, called \emph{decoupling measure} in that paper. Contrarily, in the present paper we do not assume the independence between the reference filtration and $\tau$, we only require the hypotheses $(\Hscr)$ and $(\Ascr)$. We notice that hypothesis $(\Hscr)$ is always satisfied if the reference filtration and $\tau$ are independent.

We now give an overview of the literature about the propagation of martingale representation theorems to progressively enlarged filtrations. 

In Aksamit et al.\ \cite{AJR18}, the reference filtration is generated by a Poisson process and enlarged by a general random time $\tau$ to $\Gbb$. Then, a martingale representation theorem for a class of $\Gbb$-martingales stopped in $\tau$ is obtained in the filtration $\Gbb$.  In \cite{DTJ19}, martingale representation theorems in the enlargement of the filtration generated by a point process are studied. In Callegaro et al.\ \cite{CJZ13}, the propagation of PRP to $\Gbb$ is studied for a random time $\tau$ satisfying \emph{Jacod's equivalence hypothesis}. We also recall the work of Coculescu, Jeanblanc and Nikeghbali \cite{CJN12}. Under more general conditions, the propagation of the PRP is studied in \cite{JS15}.

For results about the propagation of the \emph{weak representation property} (i.e., representation with respect to a stochastic integral with respect to a continuous martingale plus a stochastic integral with respect to a compensated jump measure) to a progressively enlarged filtration, we recall Barlow \cite{Bar78}, where the enlargement is obtained by a honest time, and Di Tella \cite{DT19}, where the hypotheses $(\Hscr)$ and $(\Ascr)$ are assumed.

The present paper has the following structure: In Section \ref{sec:setting} we recall some basics which are needed later on. Section \ref{sec:prog.en} is devoted to a review of some results about enlargement of filtrations which are important for this paper. For a comprehensive and extensive presentation of these results we refer to \cite{AJ17}. However, in Section \ref{sec:prog.en} we also give elementary novel proofs of known results. In our opinion, these proofs are of independent interest. In  Section \ref{sec:PRP}, we prove the main result of the present paper about the propagation of the PRP to progressively enlarged L\'evy filtrations. In Section \ref{subs:sp.c.bf}, we study the multiplicity of the progressively enlarged Brownian filtration.

 \section{General Setting}\label{sec:setting}
Let $(\Om,\Fscr,\Pbb)$ be a complete probability space. We consider  a filtration $\Fbb$ satisfying the usual conditions, that is, $\Fbb$ is right-continuous and $\Fscr_0$ contains all $\Pbb$-null sets of $\Fscr$. We define $\Fscr_\infty:=\bigvee_{t\geq0}\Fscr_t$ and $\Fscr_{\infty-}:=\Fscr_\infty$. By $\Pscr(\Fbb)$ we denote the $\sig$-algebra of $\Fbb$-predictable subsets of $\Rbb_+\times\Om$.

For a \cadlag\ process $X$, we denote by $X_-$ the left-limit process of $X$ with the convention $X_{0-}:=X_0$. By $\Delta X:=X-X_-$, we denote the jump process of $X$. Note that $\Delta X_0=0$. We write $X_\infty:=\lim_{t\rightarrow +\infty} X_t$ if the limit on the right-hand side exists and define $X_{\infty-}:=X_\infty$.

For two semimartingales $X$ and $Y$, we denote by $[X,Y]$ the \emph{(quadratic) covariation} of $X$ and $Y$ (see \cite[Definition I.4.45 and Theorem I.4.52]{JS00}).

The space of uniformly integrable martingales $X$ satisfying $\sup_{t\geq 0}\Ebb[X\p2_t]<+\infty$ is denoted by $\Hscr\p2(\Fbb)$. In virtue of \cite[Theorem I.1.42 a)]{JS00}, if $X\in\Hscr\p2(\Fbb)$, then the terminal random variable $X_\infty$ is well defined and, additionally, the inclusion $X_\infty\in L\p2(\Pbb)$ holds. Furthermore, $\|X\|_{\Hscr\p2(\Fbb)}:=\Ebb[X_\infty\p2]\p{1/2}$ defines a norm and the identity $\sup_{t\geq 0}\Ebb[X\p2_t]\p{1/2}=\|X\|_{\Hscr\p2(\Fbb)}$ easily follows. We call the Hilbert space $(\Hscr\p2(\Fbb),\|\cdot\|_{\Hscr\p2(\Fbb)})$ the space of square integrable martingales. By $\Hscr\p2_0(\Fbb)$ we indicate the subspace of $\Hscr\p2(\Fbb)$ consisting of martingales starting at zero a.s. 
By $\Hloc\p2(\Fbb)$ (resp.\ $\Hscr\p2_{0,\mathrm{loc}}(\Fbb)$) we denote the localized version of $\Hscr\p2(\Fbb)$ (resp.\ $\Hscr\p2_0(\Fbb)$).

For $X,Y\in\Hloc\p2(\Fbb)$, $\aPP{X}{Y}$ denotes the \emph{predictable (quadratic) covariation} of $X$ and $Y$ (see \cite[Theorem I.4.2]{JS00}). We say that $X$ and $Y$ are orthogonal if $X_0Y_0=0$ and $\aPP{X}{Y}=0$. In this case, $XY$ is a local martingale. The following lemma will be useful in the sequel.

\begin{lemma}\label{lem:loc.sq.in}
Let $X\in\Hscr_\mathrm{loc}\p2(\Fbb)$ be such that $X_0\in L\p2(\Pbb)$ and $\aPP{X}{X}_\infty\in L\p1(\Pbb)$. We then have $X\in\Hscr\p2(\Fbb)$.
\end{lemma}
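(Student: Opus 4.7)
The plan is to combine the localizing sequence for $X$ in $\Hscr^2_{\mathrm{loc}}(\Fbb)$ with the basic relation between a square integrable martingale and its predictable quadratic variation, and then transfer a uniform $L^2$ bound from $X^{\tau_n}$ to $X$ via Doob's inequality and monotone convergence.

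First I would pick a localizing sequence $(\tau_n)$ of $\Fbb$-stopping times with $\tau_n\uparrow\infty$ a.s.\ and $X^{\tau_n}\in\Hscr^2(\Fbb)$ for every $n$. Since $(X^{\tau_n})^2-\aPP{X^{\tau_n}}{X^{\tau_n}}$ is a uniformly integrable martingale starting at $X_0^2\in L^1(\Pbb)$, and $\aPP{X^{\tau_n}}{X^{\tau_n}}=\aPP{X}{X}^{\tau_n}$, we obtain
\begin{equation*}
\Ebb[(X^{\tau_n}_\infty)^2]=\Ebb[X_0^2]+\Ebb[\aPP{X}{X}_{\tau_n}]\leq \Ebb[X_0^2]+\Ebb[\aPP{X}{X}_\infty]<+\infty,
\end{equation*}
uniformly in $n$, using the hypotheses on $X_0$ and $\aPP{X}{X}_\infty$.

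Next, apply Doob's $L^2$-inequality to the square integrable martingale $X^{\tau_n}$:
\begin{equation*}
\Ebb\Bigl[\sup_{t\geq0}(X^{\tau_n}_t)^2\Bigr]\leq 4\,\Ebb[(X^{\tau_n}_\infty)^2]\leq 4\bigl(\Ebb[X_0^2]+\Ebb[\aPP{X}{X}_\infty]\bigr).
\end{equation*}
Because $\tau_n\uparrow\infty$ a.s., we have $\sup_{t\geq0}(X^{\tau_n}_t)^2=\sup_{t\leq\tau_n}X_t^2\uparrow\sup_{t\geq0}X_t^2$ a.s., and monotone convergence then gives $\Ebb[\sup_{t\geq0}X_t^2]<+\infty$.

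Finally, since $X$ is a local martingale dominated by the integrable variable $\sup_{t\geq0}|X_t|\in L^2(\Pbb)\subset L^1(\Pbb)$, the standard domination criterion (dominated convergence applied to $\Ebb[X^{\tau_n}_t\mid\Fscr_s]=X^{\tau_n}_s$) shows that $X$ is a uniformly integrable martingale with a well-defined terminal variable $X_\infty\in L^2(\Pbb)$, so $X\in\Hscr^2(\Fbb)$. I do not expect any serious obstacle: the only subtlety is making sure the identity $\Ebb[(X^{\tau_n}_\infty)^2]=\Ebb[X_0^2]+\Ebb[\aPP{X}{X}_{\tau_n}]$ is applied with $X_0^2$ kept on the right-hand side rather than absorbed into the martingale (so that the bound is genuinely uniform in $n$), and that one invokes the domination argument rather than trying to pass to the limit directly inside a conditional expectation without control.
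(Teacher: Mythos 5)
Your proof is correct and takes essentially the same route as the paper's: localize, apply the identity $\Ebb[(X^{\tau_n}_\infty)^2]=\Ebb[X_0^2]+\Ebb[\aPP{X}{X}_{\tau_n}]$, and bound uniformly by $\Ebb[X_0^2]+\Ebb[\aPP{X}{X}_\infty]$. The only difference is the finishing step: the paper passes to the limit via Fatou's lemma to obtain $\sup_t\Ebb[X_t^2]<+\infty$ and treats the conclusion $X\in\Hscr^2(\Fbb)$ as immediate, whereas you invoke Doob's $L^2$-inequality together with monotone convergence to control $\Ebb[\sup_t X_t^2]$ and then use the domination criterion to verify the martingale property explicitly; this is a bit more detailed but the same idea.
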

\begin{proof}
Let $(\et_n)_n$ be a localizing sequence for $X$. 
Then, by Fatou's Lemma, we get
\[
\Ebb[X\p2_t]\leq\liminf_{n\rightarrow+\infty}\,\Ebb[X_{t\wedge\et_n}\p2]=\Ebb[X_0\p2]+\liminf_{n\rightarrow+\infty}\,\Ebb[\aPP{X}{X}_{t\wedge\et_n}]\leq\Ebb[X_0\p2]+\Ebb[\aPP{X}{X}_\infty]<+\infty,
\]
from which the claim immediately follows. The proof is complete.

\end{proof}

For any bounded or nonnegative measurable process $X$, we denote by ${}\p o X$ the $\Fbb$-\emph{optional projection} and by ${}\p p X$ the $\Fbb$-\emph{predictable projection} of $X$ (see \cite[Th\'eor\`eme V.14 and V.15]{D72}, in \cite[Theorems 5.1 and 5.2]{HWY92}, a more general definition is given). We recall that, for any $\Fbb$-stopping time $\et$, ${}\p o X$ satisfies \[{}\p o X_\et1_{\{\et<+\infty\}}=\Ebb[X_\et1_{\{\et<+\infty\}}|\Fscr_\et]\quad\textnormal{ a.s.}\] Analogously, for every $\Fbb$-predictable stopping time $\et$, ${}\p p X$ satisfies \[{}\p p X_\et1_{\{\et<+\infty\}}=\Ebb[X_\et1_{\{\et<+\infty\}}|\Fscr_{\et-}]\quad\textnormal{ a.s.}\] 

From \cite[Th\'eor\`eme V.20]{D72}, if $X$ is bounded and left-continuous, then ${}\p pX$ is left-continuous as well, while if $X$ is right-continuous, then ${}\p oX$ is c\`adl\`ag.

A process $B$ is called increasing if $B_0=0$, $B_t\in L\p1(\Pbb)$, $t\geq0$, and if the paths $t\mapsto B_t(\om)$ are \emph{increasing} and \emph{right continuous}. Notice that increasing processes are not necessarily adapted but they are c\`adl\`ag. We say that an increasing process $B$ is integrable if its terminal random variable $B_\infty$ belongs to $L\p1(\Pbb)$. For every non-negative measurable process $K$ and every increasing process $B$, we denote by $K\cdot B$ the (Stieltjes--Lebesgue) integral process of $K$ with respect to $B$, that is,
\[
K\cdot B_t(\om):=\int_0\p tK_s(\om)\rmd B_s(\om),\quad (t,\om)\in\Rbb_+\times\Om.
\]

For an increasing process $B$, we denote by $B\p o$ the \emph{dual optional projection} and by $B\p p$ the \emph{dual predictable projection} of $B$ (see \cite[Th\'eor\`eme V.28]{D72}). We recall that $B\p o$ is the  unique $\Fbb$-optional process (resp., $B\p p$ is the unique $\Fbb$-predictable process) such that, for any nonnegative (or bounded) measurable process $X$,
\[
\Ebb[{}\p iX\cdot B_\infty]=\Ebb[X\cdot B\p i_\infty],\quad i=o\ \textnormal{ (resp., } i=p\textnormal{)}.
\]
We remark that $B\p o-B\p p$ is a martingale and, if $B$ is $\Fbb$-adapted, then $B\p o=B$ and $B-B\p p$ is a martingale. In this case, $B\p p$ is called \emph{the} $\Fbb$-\emph{predictable compensator} of $B$.

\section{Progressively Enlarged Filtrations}\label{sec:prog.en}

A \emph{random time} $\tau$ is a $(0,+\infty]$-valued random variable. We introduce the so-called \emph{default process} $H=(H_t)_{t\geq0}$, where $H_t(\om):=1_{[\tau,+\infty)}(t,\om)=1_{\{\tau(\om)\leq t\}}$. Clearly, $H$ is a bounded increasing process and $H_0=0$. By $\Hbb=(\Hscr_t)_{t\geq0}$ we denote the filtration generated by $H$, i.e., for $t\geq0$, we define $\Hscr_t:=\sig(\{H_s,\ s\in[0,t]\})$.

For a filtration $\Fbb$ satisfying the usual conditions, we denote by $\wt\Gbb=(\wt\Gscr_t)_{t\geq0}$ the filtration defined by $\wt\Gscr_t:=\Fscr_t\vee\Hscr_t$, $t\geq0$. The filtration $\Gbb$ is the \emph{smallest} right-continuous filtration containing $\wt\Gbb$, that is,
\begin{equation}\label{eq:def.Gtil}
 \Gscr_t:=\bigcap_{\ep>0} \wt\Gscr_{t+\ep},\quad t\geq0.
\end{equation}
Notice that $\Gbb$ is the smallest right-continuous filtration containing $\Fbb$ and such that $\tau$ is a $\Gbb$-stopping time. Furthermore, since by assumption $\Fscr_0$ contains all $\Fscr$-null sets, $\Gbb$ satisfies the usual conditions.

\begin{definition}\label{def.av.im} Let $\tau$ be a random time.

(i) We say that $\tau$ satisfies  hypothesis $(\Ascr)$ if $\tau$ avoids $\Fbb$-stopping times, that is, if for every $\Fbb$-stopping time $\et$, $\Pbb[\tau=\et<+\infty]=0$ holds. 

(ii) We say that $\tau$ satisfies  hypothesis $(\Hscr)$ if $\Fbb$ is immersed in $\Gbb$, that is, if every $\Fbb$-martingale is also  a $\Gbb$-martingale.
\end{definition}
Hypotheses $(\Ascr)$ and $(\Hscr)$ will play a central role in this paper. Therefore, for the sake of shortness, we formulate the following assumption:
\begin{assumption}\label{ass:en.fi}
\emph{The random time $\tau:\Om\longrightarrow(0,+\infty]$ satisfies the hypotheses $(\Ascr)$ and $(\Hscr)$.}
\end{assumption}

Examples of a random time $\tau$ satisfying Assumption \ref{ass:en.fi} are discussed in Remark \ref{rem:Cox.con} below.

We shall use the following characterization of hypothesis $(\Hscr)$ (see, e.g., \cite[Theorem 3.2]{AJ17}):
\begin{proposition}\label{prop:imm.eq.for}
The three properties below are equivalent:

\textnormal{(i)} Hypothesis $(\Hscr)$ holds.

\textnormal{(ii)} For every $\Gscr_t$-measurable and integrable random variable $G$, the identity $\Ebb[G|\Fscr_\infty]=\Ebb[G|\Fscr_t]$ holds, for every $t\geq0$.

\textnormal{(iii)} For every $\Fscr_\infty$-measurable and integrable random variable $F$, the identity $\Ebb[F|\Gscr_t]=\Ebb[F|\Fscr_t]$ holds, for every $t\geq0$.

\end{proposition}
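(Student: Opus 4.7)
The plan is to establish the three properties as a cycle \textnormal{(i)} $\Rightarrow$ \textnormal{(iii)} $\Rightarrow$ \textnormal{(ii)} $\Rightarrow$ \textnormal{(i)}. Each step is a short tower-property manipulation; the only analytic input is the fact that two integrable random variables are a.s.\ equal as soon as they integrate to the same value against every bounded random variable in the relevant $\sigma$-algebra.

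For \textnormal{(i)} $\Rightarrow$ \textnormal{(iii)}, given $F\in L\p1(\Pbb)$ with $F$ being $\Fscr_\infty$-measurable, I would form the uniformly integrable $\Fbb$-martingale $M_t:=\Ebb[F|\Fscr_t]$, closed by $F=M_\infty$ in the sense of martingale convergence. Hypothesis $(\Hscr)$ makes $M$ a $\Gbb$-martingale as well, so $\Ebb[M_s|\Gscr_t]=M_t$ for every $s\geq t$. Using the $L\p1$-continuity of the $\Gbb$-conditional expectation together with $M_s\to F$ in $L\p1$, one passes to the limit $s\to+\infty$ and concludes $\Ebb[F|\Gscr_t]=M_t=\Ebb[F|\Fscr_t]$.

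For \textnormal{(iii)} $\Rightarrow$ \textnormal{(ii)}, I would fix $G\in L\p1(\Pbb)$ with $G$ being $\Gscr_t$-measurable and test against an arbitrary bounded $\Fscr_\infty$-measurable $F$. A straightforward tower chain
\[
\Ebb[GF]=\Ebb[G\,\Ebb[F|\Gscr_t]]=\Ebb[G\,\Ebb[F|\Fscr_t]]=\Ebb[\Ebb[G|\Fscr_t]\,\Ebb[F|\Fscr_t]]=\Ebb[\Ebb[G|\Fscr_t]\,F]
\]
then identifies $\Ebb[G|\Fscr_\infty]$ with the $\Fscr_t$-measurable (hence $\Fscr_\infty$-measurable) random variable $\Ebb[G|\Fscr_t]$.

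For \textnormal{(ii)} $\Rightarrow$ \textnormal{(i)}, let $M$ be an $\Fbb$-martingale and $s\leq t$. For every bounded $\Gscr_s$-measurable $G$, using that $M_t$ is $\Fscr_\infty$-measurable and applying \textnormal{(ii)} to $G$,
\[
\Ebb[M_tG]=\Ebb[M_t\,\Ebb[G|\Fscr_\infty]]=\Ebb[M_t\,\Ebb[G|\Fscr_s]]=\Ebb[M_s\,\Ebb[G|\Fscr_s]]=\Ebb[M_sG],
\]
where the third equality is the $\Fbb$-martingale property of $M$. This yields $\Ebb[M_t|\Gscr_s]=M_s$, so that $M$ is a $\Gbb$-martingale. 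I do not foresee any substantive obstacle; the mildly delicate point is only the limit passage in \textnormal{(i)} $\Rightarrow$ \textnormal{(iii)}, which just requires that every $F\in L\p1(\Fscr_\infty)$ closes a uniformly integrable $\Fbb$-martingale and the $L\p1$-continuity of $\Gbb$-conditional expectation.
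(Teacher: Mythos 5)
Your cyclic argument \textnormal{(i)} $\Rightarrow$ \textnormal{(iii)} $\Rightarrow$ \textnormal{(ii)} $\Rightarrow$ \textnormal{(i)} is correct; each step is a valid tower-property manipulation, the $L^1$ limit passage in the first step is justified by the $L^1$-contractivity of conditional expectation together with martingale convergence, and the remaining steps only use boundedness of the test variables to ensure integrability. The paper itself does not prove this proposition but cites it to \cite[Theorem~3.2]{AJ17}, so there is no in-paper argument to compare against; your proof is the standard one found in the literature, and it reproduces that reference's characterisation faithfully.
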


Let $H\p o$ and $H\p p$ denote the $\Fbb$-dual optional and the $\Fbb$-dual predictable projection of $H$, respectively. We recall the following result:
\begin{lemma}\label{lem:Z.avoi} 
\textnormal{(i)} The identity $\Gscr_0=\Fscr_0$ holds. Hence, $\Gscr_0$ is trivial if and only if $\Fscr_{0}$ is trivial.

\textnormal{(ii)} $H\p o$ is a.s.\ continuous and indistinguishable from $H\p p$ (hence, $H\p p$ is a.s. continuous as well) if and only if $\tau$ satisfies $(\Ascr)$.
\end{lemma}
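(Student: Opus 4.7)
My plan separates the two parts.

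For part (i), the inclusion $\Fscr_0\subseteq\Gscr_0$ is immediate from the definitions. For the reverse inclusion the key observation is that, since $\tau>0$ a.s., every set in $\Hscr_\ep$ intersected with $\{\tau>\ep\}$ is either empty or equal to $\{\tau>\ep\}$ (because $H_s=0$ for all $s\leq\ep$ on $\{\tau>\ep\}$). A short monotone-class argument then shows that for every $\ep>0$ and every $A\in\wt\Gscr_\ep$ there is a set $A_\ep\in\Fscr_\ep$ with $A\cap\{\tau>\ep\}=A_\ep\cap\{\tau>\ep\}$. Given $A\in\Gscr_0$, I would fix a sequence $\ep_n\downarrow0$ with corresponding $A_{\ep_n}\in\Fscr_{\ep_n}$ and set $\tilde A:=\limsup_n A_{\ep_n}$. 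For each $N\geq1$ we have $\bigcup_{n\geq N}A_{\ep_n}\in\Fscr_{\ep_N}$ (since $\Fscr_{\ep_n}\subseteq\Fscr_{\ep_N}$ for $n\geq N$), so the right-continuity of $\Fbb$ yields $\tilde A\in\bigcap_N\Fscr_{\ep_N}=\Fscr_0$. Combined with $\{\tau>\ep_n\}\nearrow\Om$ a.s., the identity $A\cap\{\tau>\ep_n\}=A_{\ep_n}\cap\{\tau>\ep_n\}$ gives $A=\tilde A$ outside a $\Pbb$-null set, and completeness of $\Fscr_0$ finishes the argument.

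For part (ii), I would first establish that $(\Ascr)$ is equivalent to the a.s.\ continuity of $H\p o$, and then deduce that, when $H\p o$ is continuous, $H\p o$ and $H\p p$ must coincide. For the first equivalence I would fix an arbitrary $\Fbb$-stopping time $\eta$ and apply the defining identity of the dual optional projection to the $\Fbb$-optional indicator $1_{\llbracket\eta\rrbracket}$ to derive
\[
\Ebb[\Delta H\p o_\eta\, 1_{\{\eta<+\infty\}}]=\Ebb\bigl[1_{\llbracket\eta\rrbracket}\cdot H_\infty\bigr]=\Pbb[\tau=\eta<+\infty].
\]
Since $H\p o$ is a c\`adl\`ag $\Fbb$-optional process, its jumps are exhausted by a countable family of $\Fbb$-stopping times, so the simultaneous vanishing of the left-hand side over all such $\eta$ is equivalent both to $(\Ascr)$ and to the a.s.\ continuity of $H\p o$. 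For the coincidence $H\p o=H\p p$ under continuity of $H\p o$, I would note that a continuous adapted process is $\Fbb$-predictable, so $H\p o-H\p p$ is an $\Fbb$-predictable local martingale with paths of finite variation (both processes being bounded by $1$). The classical rigidity result that a predictable local martingale of finite variation starting from $0$ is identically $0$ then forces $H\p o=H\p p$, from which the continuity of $H\p p$ follows.

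The main step I expect to require care is the monotone-class decomposition in (i) representing a generic element of $\wt\Gscr_\ep$ on $\{\tau>\ep\}$ by a set of $\Fscr_\ep$; once that is in place, the $\limsup$ argument is a straightforward right-continuity exercise, and part (ii) is a clean combination of the definition of the dual optional projection with the standard structure of c\`adl\`ag optional and predictable processes.
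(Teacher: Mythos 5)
Your proof is correct, and part (i) takes a genuinely different route from the paper. For part (i), the paper simply cites Jeulin's result that $\Gscr_0=\Fscr_0\vee\sigma(\{\tau=0\})$ and then uses $\tau>0$; you instead give a self-contained argument, first observing that the trace of $\Hscr_\ep$ on $\{\tau>\ep\}$ is trivial, then using a $\sigma$-algebra/monotone-class argument to represent $\wt\Gscr_\ep$-sets on $\{\tau>\ep\}$ by $\Fscr_\ep$-sets, and finally passing to the limit with a $\limsup$ and the right-continuity of $\Fbb$. This is more elementary and exposes the mechanism behind Jeulin's lemma directly, at the cost of a few more lines. For part (ii), the substance of your argument matches the paper's: you both derive $\Ebb[\Delta H^o_\eta 1_{\{\eta<\infty\}}]=\Pbb[\tau=\eta<\infty]$ (you via the defining integral identity applied to $1_{\llbracket\eta\rrbracket}$, the paper via the pointwise conditional-expectation formula $\Delta H^o_\eta 1_{\{\eta<\infty\}}=\Ebb[\Delta H_\eta\mid\Fscr_\eta]1_{\{\eta<\infty\}}$ followed by taking expectations), then use positivity of the jumps to pass from vanishing expectation to a.s.\ vanishing, and then upgrade to path-continuity (you via exhaustion of jumps by a countable family of stopping times, the paper via the optional section theorem — essentially the same tool). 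Your justification that a continuous adapted $H^o$ forces $H^o=H^p$ via the rigidity of predictable finite-variation local martingales is the standard argument the paper leaves implicit, so you are just spelling out what the paper states tersely.
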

\begin{proof}
To see (i), we observe that from \cite[Lemme 4.4 a)]{Jeu80} the identity $\Gscr_0=\Fscr_{0}\vee\sig(\{\tau=0\})$ follows. In particular, since $\tau$ takes values in $(0,+\infty]$,  we get $\Gscr_0=\Fscr_0$ and this shows (i). We now come to (ii). First we observe that, if $H\p{o}$ is a.s.\ continuous, then it is clearly $\Fbb$-predictable, since it is  $\Fbb$-adapted. So, $H\p p$ and $H\p o$ are indistinguishable and both continuous. Therefore, it is sufficient to show that $H\p o$ is continuous if and only if $\tau$ satisfies the hypothesis $(\Ascr)$. Let $\et$ be an $\Fbb$-stopping time. The properties of the $\Fbb$-dual optional projection (see \cite[Theorem 5.27]{HWY92}) yield $\Delta H_\eta\p o1_{\{\et<+\infty\}}=\Ebb[\Delta H_\eta|\Fscr_\eta]1_{\{\et<+\infty\}}$ a.s. Hence, taking the expectation in this identity we get
\begin{equation}\label{eq:lem.Z.avoi}
\Ebb\big[\Delta H\p o_\et1_{\{\et<+\infty\}}\big]=\Ebb\big[\Delta H_\eta1_{\{\et<+\infty\}}\big]=\Pbb[\tau=\et<+\infty].
\end{equation}
If we now assume that $H\p o$ is a.s. continuous, then $\Delta H\p o_\et=0$ a.s.\ and \eqref{eq:lem.Z.avoi} implies the hypothesis $(\Ascr)$. Conversely, if the hypothesis $(\Ascr)$ is satisfied, then \eqref{eq:lem.Z.avoi} yields $\Ebb[\Delta H\p o_\et1_{\{\et<+\infty\}}]=0$. But, since $H\p o$ is increasing, we have $\Delta H_\et\p o1_{\{\et<+\infty\}}\ge0$. Hence, we get  $\Delta H\p o_\et1_{\{\et<+\infty\}}=0$ a.s. Thus, for every finite-valued $\Fbb$-stopping time $\et$, we obtain the identity $H\p o_\et=H\p o_{\et-}$ a.s. As a consequence of the optional section theorem (see \cite[Th\'eor\`eme IV.13]{D72}), $H\p o$ and $H\p o_-$ are indistinguishable. In particular, $H\p o$ is continuous. The proof of the lemma is complete.

\end{proof}

 Following \cite[p.63]{Jeu80}, we introduce
\begin{equation}\label{eq:az.supm}
A:={}^o(1-H)={}^o1_{[0,\tau)},
\end{equation}
that is, $A$ is the optional projection of the bounded and \emph{right-continuous} measurable process $(1-H)$. Notice that $A$ is \cadlag. Furthermore,
\begin{equation}\label{eq:def.az.supm}
A_t=\Pbb[\tau>t|\Fscr_t],\quad\textnormal{a.s., }\ t\in[0,+\infty),
\end{equation}
and $A$ is a supermartingale of \emph{class} $(D)$, called \emph{the Az\'ema supermartingale}. We observe that $A_0=1$ and we define $A_\infty:=\Pbb[\tau=+\infty|\Fscr_\infty]$. 

\begin{lemma}\label{lem:prop.Z}
\textnormal{(i)} For any $\Fbb$-stopping time $\et$, we have
\begin{equation}\label{eq:Z.st.t}
A_\et=\Pbb[\tau>\et|\Fscr_\et]1_{\{\et<+\infty\}}+A_\infty1_{\{\et=+\infty\}}\quad \textnormal{a.s.}
\end{equation}

\textnormal{(ii)} The processes $A_-$ and the $\Fbb$-predictable projection ${}\p p1_{[0,\tau]}$ are indistinguishable and, for any $\Fbb$-predictable stopping time $\et$, we have
\begin{equation}\label{eq:Z-.st.t}
A_{\et-}=\Pbb[\tau\geq\et|\Fscr_{\et-}]1_{\{\et<+\infty\}}+A_\infty1_{\{\et=+\infty\}}\quad \textnormal{a.s.}
\end{equation}
\end{lemma}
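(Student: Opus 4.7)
The plan is to handle (i) directly from the defining property of the optional projection and to derive (ii) via the predictable section theorem, after identifying the common value of $A_-$ and ${}^p 1_{[0,\tau]}$ on the graph of every predictable stopping time. For (i), applying the defining relation ${}^o X_\et 1_{\{\et<+\infty\}} = \Ebb[X_\et 1_{\{\et<+\infty\}}\mid \Fscr_\et]$ to the right-continuous bounded process $X = 1_{[0,\tau)} = 1-H$ immediately gives $A_\et 1_{\{\et<+\infty\}} = \Pbb[\tau>\et\mid\Fscr_\et]\,1_{\{\et<+\infty\}}$. On $\{\et=+\infty\}$, $A_\et$ is interpreted as the terminal value $A_\infty$, which by the very definition adopted just before the lemma equals $\Pbb[\tau=+\infty\mid\Fscr_\infty]$; the convergence $A_t\to A_\infty$ a.s.\ and in $L^1$ follows from $A$ being of class $(D)$ together with the inclusion $\{\tau>t\}\downarrow\{\tau=+\infty\}$.

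For (ii), I would first compute $A_{\et-}$ at an arbitrary $\Fbb$-predictable stopping time $\et$. Pick an announcing sequence $\et_n\uparrow\et$ with $\et_n<\et$ on $\{\et>0\}$. Since $A$ is c\`adl\`ag, $A_{\et_n}\to A_{\et-}$ a.s.\ on $\{\et<+\infty\}$; by part (i), $A_{\et_n} = \Ebb[X_n\mid\Fscr_{\et_n}]$ with $X_n := 1_{\{\tau>\et_n\}}$. The set-theoretic identity $\bigcap_n\{\tau>\et_n\}=\{\tau\geq\et\}$ gives $X_n\to 1_{\{\tau\geq\et\}}$ pointwise, and the standard fact $\bigvee_n\Fscr_{\et_n}=\Fscr_{\et-}$ then allows Hunt's lemma (or the conditional dominated convergence theorem) to pass to the limit, yielding $A_{\et-} = \Pbb[\tau\geq\et\mid\Fscr_{\et-}]$ a.s.\ on $\{\et<+\infty\}$. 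On $\{\et=+\infty\}$, $A_{\et-}$ is the terminal limit $A_\infty$, handled as in (i). By the defining property of the predictable projection, ${}^p 1_{[0,\tau]}(\et)1_{\{\et<+\infty\}} = \Ebb[1_{\{\tau\geq\et\}}\mid\Fscr_{\et-}]\,1_{\{\et<+\infty\}}$, so the two processes agree on the graph of every $\Fbb$-predictable stopping time.

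Finally, since $A$ is $\Fbb$-adapted and c\`adl\`ag, $A_-$ is left-continuous and adapted, hence $\Fbb$-predictable; ${}^p 1_{[0,\tau]}$ is predictable by construction. The predictable section theorem then upgrades the pointwise equality just obtained to indistinguishability, giving the first assertion of (ii), while the explicit formula for $A_{\et-}$ is precisely the pointwise identity already established. I expect the main technical hurdle to be the limit argument along the announcing sequence: one must correctly identify $\bigcap_n\{\tau>\et_n\}$ as $\{\tau\geq\et\}$ --- a subtle shift from a strict to a non-strict inequality, reflecting the possibility $\tau=\et$ --- and justify the exchange of conditional expectation and limit, for which Hunt's lemma is the most convenient tool.
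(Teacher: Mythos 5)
Your proof is correct and follows essentially the same route as the paper's: part (i) is read off directly from the defining property of the optional projection, and part (ii) is obtained by passing to the limit along an announcing sequence $\et_n\uparrow\et$ (using $\bigcap_n\{\tau>\et_n\}=\{\tau\geq\et\}$, $\bigvee_n\Fscr_{\et_n}=\Fscr_{\et-}$, and a Hunt/Meyer-type limit theorem), with the predictable section theorem then promoting the pointwise identity to indistinguishability of $A_-$ and ${}^p 1_{[0,\tau]}$. The paper invokes Meyer's lemma \cite{Me69} at exactly the point where you invoke Hunt's lemma; otherwise the arguments coincide.
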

\begin{proof}
We first observe that it is enough to show \eqref{eq:Z.st.t} and \eqref{eq:Z-.st.t} on $\{\et<+\infty\}$, since, because of $A_{\infty-}=A_\infty$, the formulas
\[
A_\et=A_\et1_{\{\et<+\infty\}}+A_\infty1_{\{\et=+\infty\}}, \quad\textnormal{and}\quad A_{\et-}=A_{\et-}1_{\{\et<+\infty\}}+A_\infty1_{\{\et=+\infty\}}
\]
obviously hold. Therefore, to see (i) it is enough to use the definition of the optional projection of $1_{[0,\tau)}$. We now come to (ii). We show that $A_-$ and ${}\p p1_{[0,\tau]}$ are indistinguishable. From this and the properties of the predictable projection we immediately get \eqref{eq:Z-.st.t} on $\{\et<+\infty\}$. Let us therefore consider a finite-valued $\Fbb$-predictable stopping time $\et$ and an announcing sequence $(\et_n)_{n\geq1}$ for $\et$. In view of (i), we have $A_{\et_n}=\Ebb[\tau>\et_n|\Fscr_{\et_n}]$, for every $n\geq1$. Taking now the limit as $n\rightarrow+\infty$ in this latter identity, using that $\Fscr_{\et-}=\bigvee_{n\geq1}\Fscr_{\et_n}$ (see \cite[Th\'eor\`em III.35]{D72}), from \cite{Me69} and the properties of ${}\p p1_{[0,\tau]}$, we get
\[
A_{\et-}=\Ebb[\tau\ge\et|\Fscr_{\et-}]={}\p p1_{[0,\tau]}(\et),\quad\text{a.s.}
\]
Making use of \cite[Th\'eor\`em IV.13]{D72}, we deduce that $A_-$ and ${}\p p1_{[0,\tau]}$ are indistinguishable. The proof of (ii) is complete. The proof of the lemma is complete.
\end{proof}

It is well known that the Az\'ema supermartingale $A$ is strictly positive on $[0,\tau)$ and that $A_-$ is strictly positive on $[0,\tau]$, see, e.g., \cite[Lemme 4.3 and the paragraph before \S IV.2 on p.\ 63]{Jeu80}. 
On the basis of formulas \eqref{eq:Z.st.t} and \eqref{eq:Z-.st.t}, we  now give an elementary new proof, in our opinion of its own interest, of these properties of the Az\'ema supermartingale.
\begin{theorem}\label{thm:nonneg.A}
Let $\tau$ be a random time and let $A$ be the associated Az\'ema supermartingale. We then have

\textnormal{(i)} \ $A>0$ a.s. on $[0,\tau)$.

\textnormal{(ii)} \ $A_->0$ a.s. on $[0,\tau]$.  
\end{theorem}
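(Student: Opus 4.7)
My plan for proving both parts of Theorem \ref{thm:nonneg.A} rests on a suitable application of formula \eqref{eq:Z.st.t} at $\Fbb$-stopping times associated with the level sets of the Az\'ema supermartingale $A$. For part (i), I consider the $\Fbb$-stopping time $R:=\inf\{t\ge 0:A_t=0\}$. Since $A$ is c\`adl\`ag, the set $\{A=0\}$ is closed from the right, so right-continuity forces $A_R=0$ on $\{R<\infty\}$. Applying \eqref{eq:Z.st.t} at $\eta=R$ then gives $\Pbb[\tau>R\mid\Fscr_R]=A_R=0$ a.s.\ on $\{R<\infty\}$, whence $\tau\le R$ a.s.\ on $\{R<\infty\}$. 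As $A>0$ on $[0,R)$ by construction of $R$, this yields $A>0$ on $[0,\tau)$ a.s., proving (i).

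For part (ii), I use the approximating sequence $T_n:=\inf\{t\ge 0:A_t<1/n\}$ for $n\ge 1$. Two elementary facts about $T_n$ will be employed. By the very definition of the infimum, $A_s\ge 1/n$ for every $s<T_n$, which in turn forces $A_{t-}\ge 1/n$ for every $t\le T_n$. On the other hand, by right-continuity of $A$ and right-openness of $\{A<1/n\}$, $A_{T_n}\le 1/n$ on $\{T_n<\infty\}$. Applying \eqref{eq:Z.st.t} at $T_n$ and taking expectations therefore yields $\Pbb[\tau>T_n,T_n<\infty]\le\Ebb[A_{T_n}1_{\{T_n<\infty\}}]\le 1/n$. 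Setting $T_\infty:=\lim_n T_n$, the events $\{\tau>T_n\}\cap\{T_\infty<\infty\}$ are decreasing in $n$ (using $\{T_\infty<\infty\}\subseteq\{T_n<\infty\}$), so $\Pbb[\bigcap_n\{\tau>T_n\}\cap\{T_\infty<\infty\}]=0$. Almost surely, therefore, either $T_\infty=\infty$, in which case for every $t<\infty$ there exists $n$ with $t<T_n$ and hence $A_{t-}\ge 1/n>0$; or $\tau\le T_n$ for some $n$, in which case every $t\le\tau\le T_n$ satisfies $A_{t-}\ge 1/n>0$. In either case $A_->0$ on $[0,\tau]$ a.s., establishing (ii).

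The main subtle point is the bound $A_{T_n}\le 1/n$ at the debut of the right-open set $\{A<1/n\}$, which relies on right-continuity of $A$: any sequence $t_k\downarrow T_n$ with $A_{t_k}<1/n$ forces $A_{T_n}=\lim_k A_{t_k}\le 1/n$. The uniform bound $1/n$ in the passage to the limit then handles simultaneously the case where $T_n$ strictly increases to a predictable limit $T_\infty$ with $A_{T_\infty-}=0$, and the case where $T_n$ is eventually constant, corresponding to a downward jump of $A$ to zero at a (possibly totally inaccessible) stopping time. Observe that the present approach invokes only \eqref{eq:Z.st.t}; an alternative route via \eqref{eq:Z-.st.t} applied to the debut of $\{A_-=0\}$ would require a more delicate predictability analysis, which becomes subtle precisely in the jump scenario.
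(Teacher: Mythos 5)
Your proof of part (i) is essentially the same as the paper's. For part (ii), however, you take a genuinely different and simpler route. The paper also approximates the debut $\zt$ of $\{A=0\}$ by stopping times at level $1/n$, namely $\zt_n:=\inf\{t>0:A_t\le 1/n\}$, but it then distinguishes the event $C$ on which $\zt_n<\zt$ for all $n$ from its complement, constructs a predictable stopping time $\zt'$ on $C$, invokes the predictable formula \eqref{eq:Z-.st.t} there, further splits $\{\zt=\infty\}$ according to whether $A_\infty=0$ or $A_\infty>0$, and appeals throughout to the classical identity $\zt=\tilde\zt$ (where $\tilde\zt$ is the debut of $\{A_-=0\}$). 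Your argument collapses this entire case analysis: the estimate $\Pbb[\tau>T_n,\ T_n<\infty]=\Ebb[A_{T_n}1_{\{T_n<\infty\}}]\le 1/n$, which uses only \eqref{eq:Z.st.t} and the right-continuity bound $A_{T_n}\le 1/n$ on $\{T_n<\infty\}$, combined with the deterministic observation $A_{t-}\ge 1/n$ on $[0,T_n]$, yields the conclusion after passing to the limit in $n$. In particular you never need \eqref{eq:Z-.st.t}, the predictability of a limiting stopping time, or the identity $\zt=\tilde\zt$, so your proof is both shorter and more self-contained. As your closing remark correctly observes, the uniform $1/n$ bound is precisely what makes the jump and predictable scenarios indistinguishable for the purposes of this estimate, which is the source of the simplification. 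Both proofs are correct; yours is the more economical one.
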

\begin{proof}
For verifying (i), we define  the $\Fbb$-stopping time $\zt:=\inf\{t\geq0: A_{t}=0\}$ and observe that, by the right-continuity of $A$, the identity $A_\zt=0$ a.s.\ on $\{\zt<+\infty\}$ holds.  So, from  \eqref{eq:Z.st.t}, we get $\Pbb[\{\tau>\zt\}\cap\{\zt<+\infty\}]=0$, i.e., 
\begin{equation}\label{eq:tau.le.zt}
\tau\leq\zt, \quad\textnormal{a.s.\ on}\ \{\zt<+\infty\}.
\end{equation}
This means that $A_t(\om)>0$ for every $t<\tau$ on $\{\zt<+\infty\}$ a.s. On the other side, we obviously have $A_t(\om)>0$ for every $t<\tau$ on $\{\zt=+\infty\}$ a.s. Hence, $A>0$ on $[0,\tau)$ a.s.\ and the proof of (i) is complete. We now show (ii). For this, we define the sequence $(\zt_n)_{n\geq1}$ of $\Fbb$-stopping times and the random set $C$ by 
\[
\textstyle \zt_n:=\inf\{t>0: A_t\le\frac1n\},\quad n\geq1, \qquad C:=\{\zt_k<\zt,\ \text{ for all }\ k\geq1\}.
\]
The set $C$ is $\Fscr_\zt$-measurable. Indeed, we have $\zt_n\uparrow \zt$, hence $\zt_n$ is $\Fscr_\zt$-measurable, since $\zt_n$ is $\Fscr_{\zt_n}$-measurable and, because of $\zt_n\leq\zt$, $\Fscr_{\zt_n}\subseteq\Fscr_\zt$ (see \cite[Th\'eor\`eme III.28 and III.32]{D72}). So, from  $C=\bigcap_{n\geq1}\{\zt_n<\zt\}$, we get $C\in\Fscr_\zt$. Hence, 
\[
\zt\p\prime:=\begin{cases}\zt, &\textnormal{on }\ C,\\+\infty,&\textnormal{otherwise }
\end{cases}
\]
is an $\Fbb$-stopping time. Furthermore, $\zt\p\prime$ is $\Fbb$-predictable. Indeed, $\zt_n<\zt\p\prime$ on $\{\zt\p\prime<+\infty\}$ (notice that $\zt>0$ a.s., $A$ being c\`adl\`ag and because of $A_0=1$) and $\rho_n:=\zt_n\wedge n$ is an announcing sequence for $\zt\p\prime$. Because of $A_{\zt\p\prime-}1_{\{\zt<+\infty\}\cap C}=\lim_{n\rightarrow+\infty}A_{\rho_n}1_{\{\zt<+\infty\}\cap C}=0$, we obtain the identity $A_{\zt-}=0$, on the set $\{\zt\p\prime<+\infty\}=\{\zt<+\infty\}\cap C$. Applying \eqref{eq:Z-.st.t}, from the definition of $\zt\p\prime$, we get $\tau<\zt$ on $\{\zt<+\infty\}\cap C$. We now use the well-known identity $\zt=\tilde\zt$, where $\tilde\zt:=\inf\{t\geq0: A_{t-}=0\}$ (see, e.g., \cite[Theorem 2.62 and p.\ 63]{HWY92}) to get
\begin{equation}\label{eq:A-.pos.*}
A_{t-}(\om)>0, \quad \textnormal{ for every }\ t\leq\tau\ \textnormal{ on }\ \{\zt<+\infty\}\cap C\ \textnormal{a.s.}
\end{equation}
Next we consider the set $C\p c=\{\exists\ k: \zt_k=\zt\}$. Then, on $[0,\zt]\cap \big(\{\zt<+\infty\}\cap C\p c\times\Rbb_+\big)$, we have $A_{t-}(\om)>0$ for all $t$ a.s. From \eqref{eq:tau.le.zt}, we get
\begin{equation}\label{eq:A-.pos.**}
A_{t-}(\om)>0, \quad \textnormal{ for every }\ t\leq\tau\ \textnormal{ on }\ \{\zt<+\infty\}\cap C\p c\ \textnormal{a.s.}
\end{equation}
Putting \eqref{eq:A-.pos.*} and \eqref{eq:A-.pos.**} together, we obtain 

\begin{equation}\label{eq:A.min.pos.zt.fin}
A_{t-}(\om)>0\quad \text{for every }\ t\leq\tau\ \text{ on }\ \{\zt<+\infty\}\ \text{a.s.}
\end{equation}
Now we pass on to the set $\{\zt=+\infty\}$. First we observe that,
\begin{equation}\label{eq:tau.less.int.Ainf}
\tau<+\infty\quad\ \textnormal{ on }\ \{A_\infty=0\}\cap\{\zt=\infty\}\ \text{ a.s.}
\end{equation}
Indeed,
\[
\begin{split}
\Pbb\big[\{\tau=+\infty\}\cap\{ A_\infty=0\}\cap\{ \zt=+\infty\}\big]&=\Ebb\Big[\Pbb\big[\{\tau=+\infty\}\cap\{ A_\infty=0\}\cap\{ \zt=+\infty\}|\Fscr_\infty\big]\Big]
\\&=
\Ebb\Big[\Pbb\big[\tau=+\infty|\Fscr_\infty\big]1_{\{A_\infty=0\}\cap\{\zt=+\infty\}}\Big]
\\&=
\Ebb\big[A_\infty1_{\{A_\infty=0\}\cap\{\zt=+\infty\}}\big]=0.
\end{split}
\]
Again using $\zt=\tilde\zt$, from \eqref{eq:tau.less.int.Ainf} we get
\begin{equation}\label{eq:A-.pos.***}
A_{t-}(\om)>0, \quad \textnormal{ for every }\ t\leq\tau\ \textnormal{ on }\ \{\zt=+\infty\}\cap \{A_\infty=0\}\ \textnormal{a.s.}
\end{equation}
Finally, we consider the set $\{\zt=+\infty\}\cap \{A_\infty>0\}$. Using the identity $\zt=\tilde \zt$, we conclude that $A_->0$ on $[0,\zt)\cap\big(\{\zt=+\infty\}\cap\{A_\infty>0\}\times\Rbb_+\big)$ a.s. and, since $A_{\infty-}=A_\infty$, we deduce $A_->0$ on $[0,\zt]\cap\big(\{\zt=+\infty\}\cap\{A_\infty>0\}\times\Rbb_+\big)$ a.s. But this implies
\begin{equation}\label{eq:A-.pos.****}
A_{t-}(\om)>0, \quad \textnormal{ for every }\ t\leq\tau\ \textnormal{ on }\ \{\zt=+\infty\}\cap \{A_\infty>0\}\ \textnormal{a.s.}
\end{equation}
Summing up \eqref{eq:A-.pos.***} and \eqref{eq:A-.pos.****}, we find that $A_{t-}(\om)>0$ for every  $t\leq\tau$ on $\{\zt=+\infty\}$ a.s. Finally, from \eqref{eq:A.min.pos.zt.fin}, we get $A_->0$ on $[0,\tau]$ a.s. The proof of the theorem is complete.
\end{proof}

Relying again on formulas \eqref{eq:Z.st.t} and \eqref{eq:Z-.st.t}, we are now going to show some other useful properties of the Az\'ema supermartingale as, e.g., the well-known fact that if $\tau$ satisfies Assumption \ref{ass:en.fi}, then $A$ is continuous and decreasing (see, for example, \cite[Proposition 3.9]{AJ17} for a proof which makes use of the Doob--Meyer decomposition of $A$).  

\begin{theorem}\label{thm:con.Z}
Let $\tau$ satisfy $(\Hscr)$. Then the following properties hold: 

\textnormal{(i)} The Az\'ema supermartingale $A$ is decreasing and, for every $\Fbb$-stopping time $\et$,  it follows
\[
A_{\et-}=\Pbb[\tau\geq\et|\Fscr_{\et}]1_{\{\et<+\infty\}}+A_\infty1_{\{\et=\infty\}}\quad \textnormal{a.s.}
\]

\textnormal{(ii)} The identity $H\p o=1-A$ holds.

\textnormal{(iii)} The random time $\tau$ satisfies $(\Ascr)$ if and only if $A$ is continuous.

\end{theorem}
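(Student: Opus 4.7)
The plan is to prove the three parts in the order (i), (ii), (iii), with (ii) building on the monotonicity established in (i) and (iii) being an immediate consequence of (ii) combined with Lemma \ref{lem:Z.avoi}(ii).

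For (i), the key input is Proposition \ref{prop:imm.eq.for}(ii) applied to the $\Gscr_t$-measurable random variable $G=1_{\{\tau>t\}}$, which gives
$$A_t=\Pbb[\tau>t\mid\Fscr_t]=\Pbb[\tau>t\mid\Fscr_\infty]\quad\text{a.s.\ for each fixed }t\ge0.$$
Since $t\mapsto 1_{\{\tau>t\}}$ is decreasing, $A_s\ge A_t$ a.s.\ for rational $s<t$; right-continuity of $A$ upgrades this to pathwise monotonicity, so $A$ is a.s.\ decreasing. For the left-limit formula I would work pathwise through a regular version $\mu(\omega,\cdot)$ of the conditional distribution of $\tau$ given $\Fscr_\infty$ on the Polish space $(0,+\infty]$: the displayed identity, valid simultaneously for all rational $t$ off a null set, shows that the survival function $t\mapsto\mu(\omega,(t,+\infty])$ coincides with the path $A_\cdot(\omega)$. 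Approaching $\eta$ through rationals from below and using $(s,+\infty]\downarrow[\eta,+\infty]$ under $\mu(\omega,\cdot)$ yields
$$A_{\eta-}=\mu(\cdot,[\eta,+\infty])=\Pbb[\tau\ge\eta\mid\Fscr_\infty]\quad\text{a.s.\ on }\{\eta<+\infty\}.$$
Because $A_{\eta-}$ is $\Fscr_{\eta-}$- (hence $\Fscr_\eta$-) measurable, the tower property replaces $\Fscr_\infty$ by $\Fscr_\eta$; the case $\{\eta=+\infty\}$ is absorbed by the convention $A_{\infty-}=A_\infty$.

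For (ii), part (i) shows that $1-A$ is $\Fbb$-adapted, increasing, c\`adl\`ag, and vanishes at $0$, hence is $\Fbb$-optional. To identify it with $H\p o$ I would verify the defining property of the dual optional projection: for every bounded $\Fbb$-optional process $X$,
$$\Ebb[X\cdot H_\infty]=\Ebb\big[X_\tau 1_{\{\tau<+\infty\}}\big]=\Ebb[X\cdot(1-A)_\infty],$$
where the second equality follows by conditioning on $\Fscr_\infty$ and using, via (i), that the law $\mu(\omega,\cdot)$ of $\tau$ given $\Fscr_\infty$ has distribution function $t\mapsto(1-A)_t(\omega)$ on $(0,+\infty)$. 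Uniqueness of the dual optional projection then forces $H\p o=1-A$.

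Finally, (iii) is immediate from (ii): $H\p o$ is a.s.\ continuous if and only if $1-A$ is, if and only if $A$ is, and Lemma \ref{lem:Z.avoi}(ii) identifies the first condition with hypothesis $(\Ascr)$. The main obstacle is the passage, in (i), from the fixed-time identity $A_t=\Pbb[\tau>t\mid\Fscr_\infty]$ to the random time $\eta$: introducing a regular conditional distribution of $\tau$ given $\Fscr_\infty$ (which exists because $\tau$ takes values in the Polish space $(0,+\infty]$) reduces this subtlety to elementary pathwise monotone convergence for the measures $\mu(\omega,\cdot)$, after which the tower property cleanly delivers the conditioning on $\Fscr_\eta$.
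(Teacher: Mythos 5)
Your proof is correct, and it takes a genuinely different route from the paper's in parts (i) and (ii). For the left-limit formula in (i), the paper avoids regular conditional distributions entirely: it sets $\tau_n:=\tau+1/n$, uses hypothesis $(\Hscr)$ to show that the Az\'ema supermartingale of $\tau_n$ is the time-shifted process $A_{\cdot-1/n}$, applies the previously established Lemma \ref{lem:prop.Z}\textnormal{(i)} to $\tau_n$, and passes to the limit $n\to\infty$. This stays inside the supermartingale framework and leans on the already-proved stopping-time formula for $A$, but at the cost of the somewhat non-obvious $\tau+1/n$ trick. You instead invoke a regular conditional distribution $\mu(\omega,\cdot)$ of $\tau$ given $\Fscr_\infty$ (available because $(0,+\infty]$ is Polish), observe via immersion that $A_\cdot(\omega)$ is literally the pathwise survival function of $\mu(\omega,\cdot)$, and obtain $A_{\eta-}$ from plain monotone convergence of the kernel measures; this makes the intuitive content transparent and is arguably more elementary once one grants the RCD machinery. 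Your tower-property step to pass from $\Fscr_\infty$ to $\Fscr_\eta$ is correct, since $A_{\eta-}1_{\{\eta<+\infty\}}$ is $\Fscr_\eta$-measurable. For (ii), the paper checks equality of the two optional measures only on the generating class $\{1_{[0,\eta)}:\eta\text{ bounded stopping time}\}$ (using formula (i) for $A_{\eta-}$) and then applies the monotone class theorem; you verify the defining relation of the dual optional projection directly for all bounded optional $X$ by disintegrating through $\Fscr_\infty$ and using that $1-A$ is the pathwise distribution function of $\mu$. Both are valid. For (iii), both arguments are identical, reducing to Lemma \ref{lem:Z.avoi}\textnormal{(ii)}.
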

\begin{proof}
 We first verify (i). The equivalence of condition (ii) of Proposition \ref{prop:imm.eq.for} to $(\Hscr)$ implies that $A$ is decreasing.
Let us now define $\tau_n:=\tau+1/n$, $H\p n:=1_{[\tau_n,+\infty)}$ and $A\p n:={}\p o(1-H\p n)$. Because of hypothesis $(\Hscr)$, from Proposition \ref{prop:imm.eq.for} (see condition (ii)), we deduce 
\[
A\p n_t=\Pbb[\tau_n>t|\Fscr_t]=\Pbb[\tau>t-1/n|\Fscr_\infty]=\Pbb[\tau>t-1/n|\Fscr_{t-1/n}]=A_{t-1/n},\quad \textnormal{a.s., \ }\ t\geq1/n. 
\] 
This means that $(A\p n_t)_{t\geq1/n}$ and $(A_{t-1/n})_{t\geq1/n}$ are modifications. Since they are both c\`adl\`ag, they are indistinguishable and therefore
\[
\lim_{n\rightarrow+\infty}A\p n_t=A_{t-},\quad t\geq0,\quad \textnormal{a.s.}
\]
Applying now \eqref{eq:Z.st.t} to $\tau_n$, we find that $A\p n_\et=\Pbb[\tau_n>\et|\Fscr_\et]1_{\{\et<+\infty\}}+A_\infty1_{\{\et=+\infty\}}$, for every $\Fbb$-stopping time $\et$. Passing to the limit as $n\rightarrow+\infty$, we get (i). We now show (ii). Claim (i) yields that the $\Fbb$-optional process $1-A$ is increasing and that, for every bounded $\Fbb$-stopping time $\et$, the identity
\begin{equation}\label{eq:cor.con.Z.aux1}
\Ebb\big[1_{[0,\et)}\cdot(1-A)_\infty\big]=1-\Ebb[A_{\et-}]=\Pbb[\tau<\et]
\end{equation}
holds. On the other side, from the properties of the $\Fbb$-dual optional projection, we have  
\begin{equation}\label{eq:cor.con.Z.aux2}
\Ebb\big[1_{[0,\et)}\cdot H\p o_\infty\big]=\Pbb[\tau<\et].
\end{equation}
Therefore, comparing \eqref{eq:cor.con.Z.aux1} and \eqref{eq:cor.con.Z.aux2}, we obtain $\Ebb\big[1_{[0,\et)}\cdot(1-A)_\infty\big]=\Ebb\big[1_{[0,\et)}\cdot H\p o_\infty\big]$. The system $\Cscr:=\{1_{[0,\et)},\ \et\ \textnormal{ bounded }\ \Fbb\textnormal{-stopping time}\}$ is stable under multiplication and generates the $\Fbb$-optional $\sig$-algebra. An application of the monotone class theorem (\cite[Theorem 1.4]{HWY92}) yields
\[
\Ebb\big[K\cdot(1-A)_\infty\big]=\Ebb\big[K\cdot H\p o_\infty\big]
\] 
for any $\Fbb$-optional and bounded $K$. Because of the uniqueness of the $\Fbb$-dual optional projection, we get that $(1-A)$ is indistinguishable from $H\p o$. To conclude, we observe that (iii) immediately follows from the previous statement (ii) and from Lemma \ref{lem:Z.avoi} (ii).
The proof of the theorem is complete.
\end{proof} 
\begin{remark}\label{rem:Cox.con}
We remark that by Theorem \ref{thm:con.Z} (iii) it is easy to construct a random time $\tau$ satisfying Assumption \ref{ass:en.fi}. Indeed, to construct a random time  $\tau$ satisfying hypothesis $(\Hscr)$ one can proceed following the Cox construction as shown in, e.g., \cite[Section 6.5]{BR13}. For such a random time $\tau$, the identity $\Pbb[\tau\leq t|\Fscr_t]=\Pbb[\tau\leq t|\Fscr_\infty]$ holds and this, according to \cite[Lemma 3.8]{AJ17}, is equivalent to $(\Hscr)$. Since in \cite[Section 6.5]{BR13} it is shown that the Az\'ema supermartingale $A$ (denoted by $1-F$ therein) is continuous, we see from Theorem \ref{thm:con.Z} (iii) that $\tau$ satisfies Assumption \ref{ass:en.fi}.

Analogously, if $\tau$ is independent of $\Fbb$ and the distribution function $F_\tau$ of $\tau$ is continuous, then $\tau$ satisfies Assumption \ref{ass:en.fi}.
\end{remark}
We also have the following corollary to Theorem \ref{thm:con.Z}:
\begin{corollary}\label{cor:con.Z}
Let $\tau$ satisfy Assumption \ref{ass:en.fi}. 

\textnormal{(i)} The Az\'ema supermartingale $A$ is decreasing, continuous and satisfies $A>0$ on $[0,\tau]$.

\textnormal{(ii)} The Doob--Meyer decomposition of the supermartingale $A$ is $A=1-(1-A)$.

\textnormal{(iii)} The identities $(1-A)=H\p o=H\p p$ hold.

\end{corollary}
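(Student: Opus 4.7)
The plan is simply to assemble the three claims from results already established in this section; no new arguments are required beyond careful bookkeeping of what each preceding result contributes.

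For part (i), I note that Assumption \ref{ass:en.fi} supplies both hypothesis $(\Hscr)$ and $(\Ascr)$. Theorem \ref{thm:con.Z} (i) then yields that $A$ is decreasing, while Theorem \ref{thm:con.Z} (iii) yields that $A$ is continuous. To obtain strict positivity on $[0,\tau]$, I invoke Theorem \ref{thm:nonneg.A} (ii), which gives $A_->0$ on $[0,\tau]$ a.s.; combined with the continuity of $A$, the identity $A=A_-$ upgrades this to $A>0$ on $[0,\tau]$ a.s.

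For part (ii), I want to verify that $A = 1 - (1-A)$ is the Doob--Meyer decomposition of the class $(D)$ supermartingale $A$. By part (i), the process $B := 1-A$ is continuous, $\Fbb$-adapted, and increasing with $B_0 = 1-A_0 = 0$; being continuous and $\Fbb$-adapted it is $\Fbb$-predictable. The constant process $M\equiv 1$ is trivially a uniformly integrable $\Fbb$-martingale, and $A = M - B$. Uniqueness of the Doob--Meyer decomposition for class $(D)$ supermartingales then identifies this as \emph{the} desired decomposition. For part (iii), Theorem \ref{thm:con.Z} (ii) gives $H\p o = 1-A$, while hypothesis $(\Ascr)$ combined with Lemma \ref{lem:Z.avoi} (ii) asserts that $H\p o$ and $H\p p$ are indistinguishable. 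Therefore $1-A = H\p o = H\p p$.

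There is no substantial obstacle here; the only point worth emphasizing is that the continuity of $A$ proved in part (i) is used twice downstream: once as $A=A_-$ to lift strict positivity from $A_-$ to $A$, and once to guarantee that the compensator candidate $1-A$ is $\Fbb$-predictable (not just optional), so that the Doob--Meyer identification in (ii) is legitimate.
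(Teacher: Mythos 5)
Your proof is correct and follows essentially the same route as the paper's: part (i) combines Theorem \ref{thm:con.Z} (i), (iii) with Theorem \ref{thm:nonneg.A} (ii) via $A=A_-$; part (ii) identifies $1-(1-A)$ as the Doob--Meyer decomposition because continuity of the adapted increasing process $1-A$ makes it predictable; and part (iii) reads off the two equalities from Theorem \ref{thm:con.Z} (ii) and Lemma \ref{lem:Z.avoi} (ii).
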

\begin{proof}
To verify (i), we notice that Theorem \ref{thm:con.Z} (i) and (iii) imply that $A$ is decreasing and continuous. Thus, since $A=A_-$ holds, from Theorem \ref{thm:nonneg.A} (ii) we immediately deduce $A>0$ on $[0,\tau]$. 
We now verify (ii). In view of Theorem \ref{thm:con.Z}, the $\Fbb$-adapted process $(1-A)$ is increasing and continuous. So $(1-A)$ is a predictable increasing process and this shows (ii). Finally, we observe that the first equality in (iii)  is Theorem \ref{thm:con.Z} (ii) and the second one follows from Theorem \ref{thm:con.Z} (iii) and Lemma \ref{lem:Z.avoi} (ii). The proof of the corollary is complete. 
\end{proof}

The process $H=1_{[\tau,+\infty)}$ is $\Gbb$-adapted and increasing: Let $\Lm\p\Gbb$ denote the $\Gbb$-predictable compensator of $H$. Due to the boundedness of $H$, the increasing process $\Lm\p\Gbb$ is integrable. We introduce the process $M=(M_t)_{t\geq0}$ defined by
\begin{equation}\label{eq:def.M}
M_t:=H_t-\Lm\p\Gbb_t,\quad t\geq0.
\end{equation}
Then, $M$ is a $\Gbb$-local martingale and, additionally, $\sup_{t\geq0}|M_t|\leq 1+\Lm\p\Gbb_\infty\in L\p1(\Gscr_\infty,\Pbb)$ holds. Therefore, $M$ is bounded in $L\p1(\Gscr_\infty,\Pbb)$ and, hence, $M$ is a uniformly integrable $\Gbb$-martingale. We observe that, being of finite variation, $M$ is a purely-discontinuous martingale.  
\\[.2cm]
\indent
From \cite[Remarque 3.5 (3)]{Jeu80}, we obtain the representation
\begin{equation}\label{eq:G.com.gen}
\Lm\p\Gbb_t=\int_0\p{\tau\wedge t}\frac{1}{A_{s-}}\,\rmd H\p p_s,\quad t\ge0.
\end{equation}
Observe that, because of Theorem \ref{thm:nonneg.A} (ii), the right hand side of \eqref{eq:G.com.gen} is well defined.
\\[.2cm]
\indent In the next theorem, by $Y:=\Escr(-M)$ we denote the stochastic exponential (see \cite[\S I.4f]{JS00}) of the martingale $-M$. 

\begin{theorem}\label{thm:G.com.AH}
Let $\tau$ satisfy Assumption \ref{ass:en.fi}.

\textnormal{(i)} $\Lm\p\Gbb$ is continuous and $\Lm_t\p\Gbb=-\log( A_{\tau\wedge t})$, $t\ge0$, with the convention $\log(0):=-\infty$.

\textnormal{(ii)} $M\in\Hscr_0\p2(\Gbb)$ and $\pb{M}{M}=\Lm\p{\Gbb}$.

\textnormal{(iii)} $Y\in\Hscr\p2_\mathrm{loc}(\Gbb)$ and $Y=A\p{-1}1_{[0,\tau)}$.
\end{theorem}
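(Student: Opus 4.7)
The plan is to prove the three claims in order, each built on Corollary \ref{cor:con.Z}.

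For (i), substitute $H\p p = 1 - A$ (Corollary \ref{cor:con.Z}(iii)) and $A_- = A$ into \eqref{eq:G.com.gen}: the integrand $(1/A_{s-})\rmd H\p p_s$ becomes $-A_s^{-1}\rmd A_s$. Since $A$ is continuous of finite variation, strictly positive on $[0, \tau]$, and satisfies $A_0 = 1$, the ordinary chain rule gives $\Lm\p\Gbb_t = -\log(A_{\tau \wedge t})$. Continuity of $\Lm\p\Gbb$ is inherited from continuity of $A$.

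For (ii), first compute $[M, M] = H$: $\Lm\p\Gbb$ is continuous of finite variation and so contributes nothing to the quadratic covariation, while $[H,H] = H$ because $H$ has a single jump of size one. By uniqueness of the predictable compensator, $\pb{M}{M}$ coincides with the $\Gbb$-predictable compensator of $H$, which is $\Lm\p\Gbb$. To upgrade $M$ from a uniformly integrable martingale to an element of $\Hscr\p2_0(\Gbb)$, apply Lemma \ref{lem:loc.sq.in}: $M_0 = 0$, $M$ lies in $\Hscr\p2_\mathrm{loc}(\Gbb)$ (localize by $\eta_n := \inf\{t : |M_t| \ge n\}$, using $|\Delta M| \le 1$), and $\Ebb[\pb{M}{M}_\infty] = \Ebb[\Lm\p\Gbb_\infty] = \Ebb[H_\infty] \le 1 < +\infty$.

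For (iii), the Dol\'eans--Dade formula collapses cleanly: $M^c = 0$ because $M$ is of finite variation, and $M$ has its only jump at $\tau$ with $\Delta M_\tau = 1$. Hence
\begin{equation*}
Y_t = e^{-M_t}\prod_{s \le t}(1 - \Delta M_s)\,e^{\Delta M_s} = e^{-M_t + H_t}\,1_{[0, \tau)}(t) = e^{\Lm\p\Gbb_t}\,1_{[0, \tau)}(t),
\end{equation*}
which by (i) equals $A\p{-1}\,1_{[0, \tau)}$.

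The main obstacle is the local square-integrability of $Y$: the candidate bound $\pb{Y}{Y}_\infty$ involves $A_\tau^{-2}$ and need not be integrable, so Lemma \ref{lem:loc.sq.in} is not directly applicable. The remedy is a pathwise bound. On $\{\tau < \infty\}$, monotonicity and continuity of $A$ yield $Y_t = A_t^{-1} \le A_\tau^{-1}$ for $t < \tau$ and $Y_t = 0$ for $t \ge \tau$, with $A_\tau > 0$ a.s.\ by Corollary \ref{cor:con.Z}(i). On $\{\tau = +\infty\}$, $Y_t = A_t^{-1} \le A_\infty^{-1}$, and $A_\infty > 0$ a.s.\ on this set because $\Pbb[A_\infty = 0,\, \tau = +\infty] = \Ebb[A_\infty\,1_{\{A_\infty = 0\}}] = 0$. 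Thus $Y^* := \sup_t Y_t < +\infty$ a.s., so $\rho_n := \inf\{t : Y_t \ge n\} \wedge n$ tends a.s.\ to $+\infty$. Since the sole jump of $Y$ is the downward jump to zero at $\tau$, one verifies $|Y^{\rho_n}| \le n$, which gives $Y^{\rho_n} \in \Hscr\p2(\Gbb)$ and hence $Y \in \Hscr\p2_\mathrm{loc}(\Gbb)$.
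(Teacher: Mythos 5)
Your proof is correct and structurally parallel to the paper's, but with two local improvements worth noting. In (ii) you identify $\aPP{M}{M}$ with $\Lm\p\Gbb$ directly: both are $\Gbb$-predictable compensators of $[M,M]=H$, so they coincide by uniqueness. The paper reaches the same identity via an extra integration-by-parts step ($M\p2=2M_-\cdot M+H$), which your observation renders unnecessary. In (iii) the paper establishes $Y\in\Hscr\p2_{\mathrm{loc}}(\Gbb)$ in a single line by invoking the Dol\'eans--Dade equation $Y=1-Y_-\cdot M$ together with $M\in\Hscr\p2_{\mathrm{loc}}(\Gbb)$ from (ii); you instead give a more elementary pathwise argument, showing $Y\p*:=\sup_t Y_t<+\infty$ a.s.\ by splitting into $\{\tau<\infty\}$ (where $Y\leq A_\tau\p{-1}$, with $A_\tau>0$ by Corollary \ref{cor:con.Z}) and $\{\tau=+\infty\}$ (where $\Pbb[A_\infty=0,\tau=+\infty]=\Ebb[A_\infty1_{\{A_\infty=0\}}]=0$), and then localizing by first-passage times of $Y$, using that $Y$'s only jump is downward at $\tau$. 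Both routes are valid; yours is self-contained and avoids citing the stochastic-exponential SDE, while the paper's is shorter given the machinery already in place. Part (i) is identical in substance, merely more spelled out.
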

\begin{proof}
The claim (i) immediately follows from \eqref{eq:G.com.gen} and Corollary \ref{cor:con.Z} (i), (iii). We now come to (ii). It is clear that $M_0=0$.  The continuity of $\Lm\p\Gbb$ yields that the uniformly integrable martingale $M$ has a unique jump of size $1$ in $\tau$. So, $M$ is locally bounded and hence $M\in\Hscr\p2_{\mathrm{loc},0}(\Gbb)$. Furthermore, recalling that $M$ is purely discontinuous, we get
\[
[M,M]_t=\sum_{s\leq t}(\Delta M_s)\p2=\sum_{s\leq t}(\Delta H_s)=H_t,\quad t\geq0.
\]
Hence, $[M,M]$ is bounded and, therefore, integrable. This implies the integrability of $\aPP{M}{M}$, which is the $\Gbb$-predictable compensator of the $\Gbb$-adapted increasing process $[M,M]$. From Lemma \ref{lem:loc.sq.in}, we get $M\in\Hscr\p2_0(\Gbb)$. Integration by parts yields $M\p2=2M_-\cdot M+H$. Since the processes $2M_-\cdot M$ and $M\p 2-\aPP{M}{M}$ are $\Gbb$-local martingales, we get that $H-\aPP{M}{M}$ is a $\Gbb$-local martingale. The $\Gbb$-predictable compensator of an increasing process being unique, we deduce the identity $\pb{M}{M}=\Lm\p{\Gbb}$, which completes the proof of (ii). We now verify (iii). Because of the Dol\'eans-Dade equation, from (ii) we derive $Y\in\Hscr\p2_\mathrm{loc}(\Gbb)$. Thanks to Theorem \ref{thm:con.Z} (ii), we have $A>0$ on $[0,\tau]$ so that $A\p{-1}$ is well-defined over $[0,\tau]$. Using the Dol\'eans-Dade exponential formula, we compute
\[\begin{split}
Y_t:=\Escr(-M)_t&=\exp(-M_t)\prod_{0\leq s\leq t}(1-\Delta M_s)\exp(\Delta M_s)\\&=\begin{cases}\exp(-\log( A_{t\wedge\tau})),\ &\textnormal{on } \{t<\tau\},\\
0,\ &\textnormal{else}.
\end{cases}
\end{split}
\]
The proof of the theorem is complete.
\end{proof}

\section{Martingale Representation for Enlarged L\'evy Filtrations }\label{sec:PRP}
In this section we prove a martingale representation theorem in the progressive enlargement of a L\'evy filtration by a random time $\tau$ satisfying Assumption \ref{ass:en.fi}. This extends \cite{DTE15} and \cite{DTE16}, where a martingale representation theorem was obtained for a (non-enlarged) L\'evy filtration, to enlarged filtrations. 

For the sake of simplicity of representation, in what follows we shall restrict ourselves to the study of the predictable representation property for countable, mutual orthogonal families of locally square integrable local martingales. 

\subsection{Definition of the Predictable Representation Property}\label{subs:prp}
Let $\Fbb$ be a filtration satisfying the usual conditions and $X\in\Hscr_\mathrm{loc}\p2(\Fbb)$. We define
\[
\Lrm\p2(\Fbb,X):=\{K:\ K\textnormal{ is }\Fbb\textnormal{-predictable and }\ \Ebb[K\p2\cdot\aPP{X}{X}_{\infty}]<+\infty\}
.\]
Notice that $\Lrm\p2(\Fbb,X)$ is non empty: Indeed, for each $\Fbb$-stopping time $\sig$ such that $X\p\sig\in\Hscr\p2(\Fbb)$, we have $1_{[0,\sig]}\in\Lrm\p2(\Fbb,X)$. The space $\Lrm\p2_\mathrm{loc}(\Fbb,X)$ is defined in the following way: An $\Fbb$-predictable process $K$ belongs to  $\Lrm\p2_\mathrm{loc}(\Fbb,X)$ if and only if $1_{[0,\et_n]}K\in\Lrm\p2(\Fbb,X)$, where $(\et_n)_n$ is an increasing sequence of $\Fbb$-stopping times such that $\et_n\uparrow+\infty$ as $n\rightarrow+\infty$.

For $K\in \Lrm\p2(\Fbb, X)$, the stochastic integral of $K$ with respect to $X$ is denoted by $K\cdot X$ or $\int_0\p\cdot K_s\rmd X_s$ and it is characterized as the unique $Z\in\Hscr_{0}\p2(\Fbb)$ such that $\aPP{Z}{Y}=K\cdot\aPP{X}{Y}$, for every $Y\in\Hscr\p2(\Fbb)$. Notice that, if $K\in\Lrm\p2_\mathrm{loc}(\Fbb,X)$, then $K\cdot X\in\Hlocz\p2(\Fbb)$. 
 
\begin{definition}\label{def:prp}
Let the family $\Xscr=\{X\p n,\ n\geq1\}\subseteq\Hscr_{0,\mathrm{loc}}\p2(\Fbb)$ be such that $\aPP{X\p n}{X\p m}=0$, for $n\neq m$ (i.e., $\Xscr$ consists of pairwise orthogonal martingales). We say that $\Xscr$ possesses the PRP with respect to $\Fbb$, if every $Y\in\Hscr\p2(\Fbb)$ can be written as
\[
Y_t=Y_0+\sum_{n=1}\p{+\infty}K\p n\cdot X\p n_t,\quad X\p n\in\Xscr,\quad K\p n\in\Lrm\p2(\Fbb,X\p n),\quad n\geq1,\quad t\geq0,
\]
where the orthogonal sum converges in $(\Hscr\p2_0(\Fbb),\|\cdot\|_{\Hscr\p2(\Fbb)})$.
\end{definition}
 \subsection{The Predictable Representation Property for L\'evy Processes}\label{subs:Lev.prp}
A L\'evy process with respect to $\Fbb$ is an $\Fbb$-adapted stochastically continuous process $L$ such that  $L_0=0$, $(L_{t+s}-L_t)$ is distributed as $L_s$ and $(L_{t+s}-L_t)$ is independent of $\Fscr_t$, $s,t\geq0$. In this case we say that $(L,\Fbb)$ is a L\'evy process. As $\Fbb$ satisfies the usual conditions,  we can also assume that $L$ is c\`adl\`ag. By $\Fbb\p L$ we denote the completion in $\Fscr$ of the filtration generated by $L$. It is well-known (see \cite{W81}) that $\Fbb\p L$ satisfies the usual conditions. Furthermore, $\Fscr\p L_0(=\Fscr\p L_{0+})$ is trivial and $(L,\Fbb\p L)$ is a L\'evy process.  

We denote by $\mu$ the jump measure of $L$ and recall that $\mu$ is a homogeneous Poisson random measure with respect to $\Fbb\p L$ (see \cite{JS00}, Definition II.1.20). The predictable compensator of $\mu$ is deterministic and given by $\ell_+\otimes\nu$, where $\ell_+$ is the Lebesgue measure on $\Rbb_+$ and $\nu$ is the L\'evy measure of $L$, that is, a $\sig$-finite measure such that $\nu(\{0\})=0$ and $x\mapsto x\p2\wedge1$ is integrable. We write $\overline\mu:=\mu-\ell_+\otimes\nu$ for the compensated Poisson random measure associated with $\mu$. By $\Ws$ we denote the Gaussian part of $L$, which is an $\Fbb\p L$-Brownian motion such that $\Ebb[(\Ws_t)\p2]=\sig\p2 t$, where $\sig\p2\geq0$. By $\bt\in\Rbb$ we denote the drift parameter of $L$ and we call $(\bt,\sig\p2,\nu)$ the $\Fbb$-characteristics of $L$. For every $t\ge0$, the characteristic function of $L_t$ is given by $\Ebb[\exp(\rmi u L_t)]=\exp(t\psi(u))$ where
\[
\psi(u):=\rmi \bt u-\frac{1}2u\p2\sig\p2+\int_\Rbb(\rme\p{\rmi u x}-1-\rmi ux1_{\{|x|\leq1\}})\nu(\rmd x),\quad u\in\Rbb.
\]
The following lemma is a characterization of a L\'evy process with respect to $\Fbb$ and it will be useful later. The proof is straightforward and therefore omitted.
\begin{lemma}\label{char.lev.proc}
Let $L$ be an $\Fbb$-adapted \cadlag\ process starting at zero. Then $(L,\Fbb)$ is a L\'evy process with $\Fbb$-characteristics $(\bt,\sig\p2,\nu)$ if and only
$
Z_t(u):=\exp(\rmi u L_t-t\psi(u))
$ is a complex-valued $\Fbb$-martingale, for every $u\in\Rbb$. 
\end{lemma}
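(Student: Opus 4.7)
The plan is to exploit the fact that $Z_s(u) \ne 0$ for all $s,u$ and to rewrite the martingale equation $\Ebb[Z_t(u)\mid\Fscr_s] = Z_s(u)$ in the equivalent form
\[
\Ebb[\exp(\rmi u(L_t-L_s))\mid\Fscr_s] = \exp((t-s)\psi(u)), \qquad 0 \le s \le t,\ u \in \Rbb, \qquad (\ast)
\]
obtained by dividing by the nonvanishing factor $\exp(\rmi u L_s - s\psi(u))$. The lemma then reduces to the equivalence between the defining properties of $(L,\Fbb)$ being a L\'evy process with characteristics $(\bt,\sig^2,\nu)$ and the identity $(\ast)$ holding for every $u \in \Rbb$.

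For the forward direction, integrability of $Z_t(u)$ follows from $\Rea \psi(u) \le 0$, which gives $|Z_t(u)| = \exp(-t\,\Rea\psi(u)) < \infty$. Using that $L_t - L_s$ is $\Fscr_s$-independent and distributed as $L_{t-s}$, together with the L\'evy--Khintchine formula $\Ebb[\exp(\rmi u L_r)] = \exp(r\psi(u))$, one verifies $(\ast)$ directly, which is just a restatement of the martingale property of $Z(u)$.

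For the reverse direction, the decisive observation is that the right-hand side of $(\ast)$ is deterministic. Setting $s = 0$ recovers the L\'evy--Khintchine formula for the one-dimensional marginals of $L$. The determinism of the conditional characteristic function of $L_t - L_s$ given $\Fscr_s$ then yields independence of $L_t - L_s$ from $\Fscr_s$ by a standard Fourier-inversion argument: for any $A \in \Fscr_s$ with $\Pbb(A) > 0$, $(\ast)$ forces the characteristic function of $L_t - L_s$ under $\Pbb(\cdot \mid A)$ to coincide with the unconditional one, so the two laws agree. The form $\exp((t-s)\psi(u))$ then identifies the law of $L_t - L_s$ with that of $L_{t-s}$, giving stationary increments. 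Stochastic continuity follows from $\psi(0) = 0$ and continuity of $t \mapsto \exp(t\psi(u))$ at the origin: $\Ebb[\exp(\rmi u L_t)] \to 1$ for every $u$, whence $L_t \to 0$ in probability, and analogously for increments.

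The only step needing any care, rather than being entirely mechanical, is the passage from a deterministic conditional characteristic function to genuine independence; everything else is a direct computation, so no significant obstacle is anticipated.
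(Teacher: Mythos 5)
The paper gives no proof of Lemma~\ref{char.lev.proc}, remarking only that it is straightforward, so there is no in-paper argument to compare against; your proposal is a correct and complete elaboration of the standard argument, and it is almost certainly the proof the authors have in mind. The one step worth making fully explicit at the end of the reverse direction is that, after you have shown $(L,\Fbb)$ to be a L\'evy process with $\Ebb[\exp(\rmi u L_t)]=\exp(t\psi(u))$, you should invoke the uniqueness part of the L\'evy--Khintchine theorem to conclude that the L\'evy triplet of $L$ is precisely $(\bt,\sig\p2,\nu)$ and not merely some triplet whose exponent agrees with $\psi$; everything else, including the Fourier-inversion passage from a deterministic conditional characteristic function to genuine independence of $L_t-L_s$ from $\Fscr_s$, is exactly right.
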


Let $\Gscr\p2_{\mathrm{loc}}(\mu)$ denote the linear space of the $\Bscr(\Rbb_+)\otimes\Pscr(\Fbb)$-measurable mappings $W$ such that the increasing process
$\sum_{s\leq \cdot}G\p2(s,\om,\Delta L_s(\om))1_{\{\Delta L_s(\om)\neq0\}}$ is locally integrable.

For $G\in\Gscr\p2_{\mathrm{loc}}(\mu)$, the stochastic integral $\int_{\Rbb_+\times\Rbb}W(s,x)\overline\mu(\rmd s,\rmd x)$ of $G$ with respect to $\overline\mu$ is defined as the unique purely discontinuous martingale $Z\in \Hscr\p2_{0,\mathrm{loc}}(\Fbb)$ such that 
\[
\Delta Z_t(\om)=G(t,\om,\Delta L_t(\om))1_{\{\Delta L_t(\om)\neq0\}}, \quad t\ge 0
\] 
(see \cite[III.3.\S a]{J79}). The existence and the uniqueness of $Z$ is a consequence of \cite[Theorem 2.45]{J79}.

For any $f\in L\p{\,2}(\nu)$ and $t\geq0$, the deterministic function $G_f(s,x):=1_{[0,t]}(s)f(x)$ belongs to $\Gscr_{\mathrm{loc}}(\mu)$. Indeed, 
\[
\Ebb\left[\sum_{s\leq t}G_f\p2(s,\Delta L_s)1_{\{\Delta L_s\neq0\}}\right]=\Ebb\left[\int_{\Rbb_+\times\Rbb}G\p2_f(s,x)\mu(\rmd s,\rmd x)\right]=t\int_\Rbb f\p2(x)\nu(\rmd x)<+\infty.
\] 
Thus, we can introduce the process $\X{f}\in\Hscr_{0,\mathrm{loc}}\p2(\Fbb)$ by
\begin{equation}\label{eq:en.mar}
\X{f}_{\,t}:=\int_{[0,t]\times\Rbb}f(x)\overline\mu(\rmd s,\rmd x),\quad t\geq0.
\end{equation}

The  properties of $X\p f$, $f\in L\p2(\nu)$, are summarised by the next theorem.

\begin{theorem}\label{thm:en.mar}
Let $(L,\Fbb)$ be a L\'evy process with the $\Fbb$-characteristics $(\bt,\sig\p2,\nu)$. For any $f\in L\p2(\nu)$, the following claims hold:

\textnormal{(i)} $(\X{f},\Fbb)$ is a L\'evy process and a true martingale. 

\textnormal{(ii)} For every $t\geq0$, the identity $\Ebb[(\X{f}_{\,t})\p{\,2}]=t\,\int_\Rbb f\p2\rmd \nu<+\infty$ holds. Hence, for every deterministic time $T\in\Rbb_+$, the process $(X\p{f}_{t\wedge T})_{t\geq0}$ belongs to $\Hscr\p2_0(\Fbb)$.

\textnormal{(iii)} $\pb{\X{f}}{\X{g}}_{\,t}=t\int_\Rbb fg\,\rmd \nu$, for every $f,g\in L\p{\,2}(\nu)$, $t\ge 0$.

\textnormal{(iv)} $\X{f}$ and $\X{g}$ are orthogonal martingales if and only if $f,g\in L\p{\,2}(\nu)$ are orthogonal functions.
\end{theorem}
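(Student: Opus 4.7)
The plan is to dispatch (ii) first via $L\p2$-isometry, derive the true-martingale half of (i) as a corollary, then address the L\'evy-process half of (i) by verifying the characterisation of Lemma~\ref{char.lev.proc}, and finally deduce (iii) and (iv) from semimartingale calculus.

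For (ii), the integrand $G_f(s,\om,x) = 1_{[0,t]}(s)f(x)$ lies in $L\p2(\ell_+\otimes\nu)$ with $\int G_f\p2\,\rmd(\ell_+\otimes\nu) = t\int f\p2\,\rmd\nu$, and the standard $L\p2$-isometry for stochastic integrals against $\ol\mu$ gives $\Ebb[(X\p f_t)\p2] = t\int f\p2\,\rmd\nu$. Stopping at a deterministic $T$ shows $(X\p f_{t\wedge T})_{t\geq0}\in\Hscr\p2_0(\Fbb)$. From this the true-martingale part of (i) is immediate: on each compact $[0,T]$, $X\p f$ is a uniformly square-integrable martingale, and $L\p2$- (hence stochastic-) continuity follows from $\Ebb[(X\p f_t-X\p f_s)\p2]=(t-s)\int f\p2\,\rmd\nu$.

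The core step is the L\'evy-process claim in (i). I would verify Lemma~\ref{char.lev.proc} with the candidate exponent
\[
\psi_f(u) := \int_\Rbb \big(\rme\p{\rmi u f(x)}-1-\rmi u f(x)\big)\,\nu(\rmd x),\qquad u\in\Rbb,
\]
which is well-defined because $|\rme\p{\rmi u y}-1-\rmi u y|\leq \tfrac12 u\p2 y\p2$ and $f\in L\p2(\nu)$. Apply It\^o's formula with $F(x)=\rme\p{\rmi u x}$ to the purely discontinuous semimartingale $X\p f$; the continuous-quadratic-variation term vanishes, and rewriting the jump sum via $\mu=\ol\mu+\ell_+\otimes\nu$ and combining with the stochastic-integral form of $\rmd X\p f$, a cancellation yields
\[
\rme\p{\rmi u X\p f_t} = 1 + N_t + \psi_f(u)\int_0\p t \rme\p{\rmi u X\p f_{s-}}\,\rmd s,
\]
for some local martingale $N$ (genuinely a martingale on each $[0,T]$, using $\int(2-2\cos(uf))\,\rmd\nu<+\infty$). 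Integration by parts on $\rme\p{-t\psi_f(u)}\cdot\rme\p{\rmi u X\p f_t}$ cancels the drift, so $Z_t(u):=\exp(\rmi u X\p f_t-t\psi_f(u))$ is a local martingale; since $|Z_t(u)|=\rme\p{-t\Rea\psi_f(u)}$ is deterministic and bounded on $[0,T]$, $Z(u)$ is in fact a true martingale. Lemma~\ref{char.lev.proc} then delivers that $(X\p f,\Fbb)$ is a L\'evy process.

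For (iii), I would write $[X\p f,X\p g]_t=\sum_{s\leq t}f(\Delta L_s)g(\Delta L_s)1_{\{\Delta L_s\neq 0\}}=\int_{[0,t]\times\Rbb}fg\,\rmd\mu$; the splitting $\mu=\ol\mu+\ell_+\otimes\nu$ decomposes this into a local martingale (using $2|fg|\leq f\p2+g\p2$ for local square integrability of $fg$ against $\ell_+\otimes\nu$) plus the deterministic process $t\int fg\,\rmd\nu$, and uniqueness of the predictable compensator yields $\pb{X\p f}{X\p g}_t=t\int fg\,\rmd\nu$. Claim (iv) is then immediate: $X\p f_0 X\p g_0=0$ trivially and, by (iii), $\pb{X\p f}{X\p g}\equiv 0$ if and only if $\int fg\,\rmd\nu=0$. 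The main obstacle will be the It\^o-formula computation in the third paragraph---specifically, rewriting the jump sum as an $\ol\mu$-integral plus its compensator, and controlling the resulting local martingale to obtain a true one, both of which rest on the estimate $\nu(\{|f|>c\})\leq c\p{-2}\int f\p2\,\rmd\nu<+\infty$ for each $c>0$.
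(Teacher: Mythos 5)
The paper states this theorem without proof, treating it as background on compensated-jump integrals for L\'evy processes (the companion Theorem \ref{thm:prp.lev} is referred to \cite{DTE15}), so there is no in-paper argument to compare your proposal against. On its own merits your derivation is correct. Part (ii) is the $L^2$-isometry; (iii) follows by computing $[X^f,X^g]_t=\int_{[0,t]\times\Rbb}fg\,\rmd\mu$, noting $fg\in L^1(\nu)$ by Cauchy--Schwarz so the variation is integrable, and compensation gives $t\int fg\,\rmd\nu$; (iv) is then immediate since $X^f_0=0$. The substantive step is the L\'evy-process half of (i), which you handle correctly via It\^o's formula, the decomposition $\mu=\overline\mu+\ell_+\otimes\nu$, and integration by parts; your control of the residual terms works, since $|\rme^{\rmi y}-1-\rmi y|^2\le 2|\rme^{\rmi y}-1|^2+2y^2\le 4y^2$ bounds the $\overline\mu$-integrand in $L^2(\ell_+\otimes\nu)$ on each compact interval, so the local martingale $N$ is a genuine martingale there.

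One detail should be spelled out to close the argument. Lemma \ref{char.lev.proc} is stated for an exponent $\psi$ written in the truncated L\'evy--Khintchine form with cut-off $1_{\{|x|\le1\}}$, whereas your $\psi_f(u)=\int_\Rbb(\rme^{\rmi uf(x)}-1-\rmi uf(x))\,\nu(\rmd x)$ uses the non-truncated compensation. To invoke the lemma as written, identify the candidate $\Fbb$-characteristics of $X^f$: the L\'evy measure $\nu_f:=\nu\circ f^{-1}$ restricted to $\Rbb\setminus\{0\}$, which is a genuine L\'evy measure because $\int_\Rbb(y^2\wedge1)\,\nu_f(\rmd y)\le\int_\Rbb f^2\,\rmd\nu<+\infty$; Gaussian part $0$; and drift $\beta_f:=-\int_{\{|f|>1\}}f\,\rmd\nu$, finite because $\int_{\{|f|>1\}}|f|\,\rmd\nu\le\int_\Rbb f^2\,\rmd\nu$. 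Then $\psi_f(u)=\rmi\beta_f u+\int_\Rbb(\rme^{\rmi uy}-1-\rmi uy1_{\{|y|\le1\}})\,\nu_f(\rmd y)$, which is the form the lemma expects. This is routine bookkeeping rather than a genuine gap, but your write-up leaves it implicit.
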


If $\Tscr\subseteq L\p2(\nu)$, we set
\begin{equation}\label{eq:mart.fam}
\Xscr_\Tscr:=\{\Ws\}\cup\{\X{f},\  f\in\Tscr\}.
\end{equation} 
For Theorem \ref{thm:prp.lev} below we refer to \cite{DTE15}, \S4.2.
\begin{theorem}\label{thm:prp.lev} Let $\Tscr:=\{f_n,\ n\geq1\}\subseteq L\p2(\nu)$ be an orthonormal basis. Then the family $\Xscr_\Tscr$ possesses the \emph{PRP} with respect to $\Fbb\p L$, i.e., every $X\in\Hscr\p2(\Fbb\p L)$, can be written as
\begin{equation}\label{eq:prp.Fl}
X_t=X_0+Z\cdot\Ws_t+\sum_{n=1}\p\infty V\p n\cdot \X{f_n}_t,\qquad Z\in\Lrm\p2(\Fbb\p L,\Ws),\ V\p n\in\Lrm\p2(\Fbb\p L,X\p{f_n}),\  n\geq1, \  t\geq0.
\end{equation}

\end{theorem}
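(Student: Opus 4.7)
The plan is to prove the result via a density/totality argument, following the classical strategy for establishing the PRP of Lévy processes. By standard Hilbert space arguments, it suffices to show that the stable subspace $\Scal(\Xscr_\Tscr)$ generated by $\Xscr_\Tscr$ coincides with $\Hscr^2_0(\Fbb^L)$. Equivalently, the linear span of terminal values $Y_\infty$ with $Y \in \Scal(\Xscr_\Tscr)$, together with the constants, should be dense in $L^2(\Fscr_\infty^L, \Pbb)$. The natural way to verify this is to exhibit a totally dense subset of $L^2(\Fscr_\infty^L, \Pbb)$ whose elements all lie in this span.

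First, I would consider the family of complex exponential martingales $Z_t(u) := \exp(\rmi u L_t - t\psi(u))$ provided by Lemma \ref{char.lev.proc}. Applying Itô's formula together with the Lévy--Itô decomposition of $L$, one obtains the stochastic integral representation
\[
Z_t(u) = 1 + \int_0^t \rmi u\, Z_{s-}(u)\,\rmd \Ws_s + \int_{[0,t]\times\Rbb} Z_{s-}(u)\bigl(\rme^{\rmi u x} - 1\bigr)\,\overline\mu(\rmd s,\rmd x).
\]
Since $\{f_n\}_{n\geq 1}$ is an orthonormal basis of $L^2(\nu)$ and $x \mapsto \rme^{\rmi u x} - 1$ lies in $L^2(\nu)$, expanding this function as $\sum_n c_n(u) f_n$ with $c_n(u) := \int_\Rbb f_n(x)(\rme^{\rmi u x} - 1)\,\nu(\rmd x)$ and using the $L^2$-isometry for stochastic integrals with respect to $\overline\mu$, the jump integral above decomposes as
\[
\sum_{n=1}^{+\infty} c_n(u)\int_0^t Z_{s-}(u)\,\rmd X^{f_n}_s,
\]
with convergence in $\Hscr^2$ on any finite interval. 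Hence, for every $T \geq 0$, the stopped martingale $Z^{\cdot\wedge T}(u)$ belongs to $\Scal(\Xscr_\Tscr)$ up to the constant $1$.

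Next, iterating: fix $0 \leq t_1 < \cdots < t_k$ and $u_1,\ldots,u_k \in \Rbb$. The products $\prod_{j=1}^k Z_{t_j}(u_j)/Z_{t_{j-1}}(u_j)$, which are finite products of exponentials of increments, also admit such representations by a straightforward induction using integration by parts and the previous step, so they lie in $\Scal(\Xscr_\Tscr) + \Cbb$. Equivalently, all random variables of the form $\exp\bigl(\rmi\sum_{j=1}^k v_j L_{t_j}\bigr)$, with $v_j \in \Rbb$, are limits in $L^2(\Pbb)$ of terminal values of elements of $\Scal(\Xscr_\Tscr) + \Cbb$.

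Finally, by the monotone class theorem (or Stone--Weierstrass on the one-point compactification plus Fourier inversion), the linear span of these multivariate characteristic exponentials is dense in $L^2(\Fscr_\infty^L, \Pbb)$, since $\Fscr_\infty^L$ is generated by the random variables $L_t$, $t\ge 0$. This yields the density of $\Scal(\Xscr_\Tscr) + \Cbb$ in $\Hscr^2(\Fbb^L)$, which is the desired PRP. The main technical obstacle is justifying the interchange of the infinite sum over $n$ with the stochastic integral and passing to the orthogonal series representation in $\Hscr^2_0(\Fbb^L)$; this requires the $L^2(\nu)$-completeness of $\{f_n\}$ combined with the $L^2$-isometry for the compensated random measure $\overline\mu$, together with Theorem \ref{thm:en.mar}(iii)--(iv) to identify the orthogonal components.
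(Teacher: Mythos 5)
Your argument is correct and is the classical route to the PRP for L\'evy filtrations: represent the complex exponential martingale $Z(u)$ via It\^o's formula and the L\'evy--It\^o decomposition, expand $x\mapsto\rme^{\rmi ux}-1\in L^2(\nu)$ (note $|\rme^{\rmi ux}-1|^2\le (u^2\vee 4)(x^2\wedge 1)$) in the orthonormal basis $\Tscr$, iterate over disjoint time intervals, and conclude by totality of exponentials in $L^2(\Fscr^L_\infty)$. This is a genuinely different route from the one the paper cites for this theorem, namely \cite{DTE15}, \S4.2. There the PRP is obtained as a corollary of a general criterion for \emph{compensated-covariation stable} families of martingales: one first proves an abstract PRP theorem for families closed under the operation $(X,Y)\mapsto[X,Y]-\aPP{X}{Y}$ together with a totality condition on finite products of increments, verifies the hypotheses for a concrete family attached to $L$, and then transfers the conclusion to an arbitrary orthonormal basis of $L^2(\nu)$ by approximation. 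Your route is more elementary and self-contained (L\'evy--It\^o plus It\^o's formula plus Fourier density) but is specific to L\'evy processes; the route of \cite{DTE15} requires more scaffolding but applies to a much wider class of martingale families and feeds directly into the chaotic representation results of \cite{DTE16}. Two places in your sketch to make explicit: (i) the ``straightforward induction'' in the iteration works because on $(t_{j-1},t_j]$ the factor $\prod_{\ell<j}\zeta_\ell$ is bounded and $\Fscr^L_{t_{j-1}}$-measurable, hence a predictable process that may be absorbed into the integrands produced by the single-factor representation; (ii) $Z(u)$ is complex-valued while Definition \ref{def:prp} is stated over $\Rbb$, so one should pass to real and imaginary parts at the end to obtain totality in the real space $L^2(\Fscr^L_\infty,\Pbb)$.
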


To simplify notation, we introduce the following space of processes:
\begin{definition}\label{def:M2}Let $\ell\p2$ denote the Hilbert space of sequences $v=(v\p{n})_{n\geq1}$ for which the norm $\|v\|_{\ell\p2}\p2:=\sum_{n=1}\p\infty (v\p{n})\p2$ is finite. 
We denote by $\M\p2(\Fbb,\ell\p2)$ the space of $\ell\p2$-valued $\Fbb$-predictable processes $V$ such that $\int_0\p {+\infty}\|V_s\|_{\ell\p2}\p2\rmd s$ is integrable. 
\end{definition}
We remark that $\sum_{n=1}\p\infty V\p n\cdot \X{f_n}\in\Hscr_0\p2(\Fbb)$ if and only if $V=(V\p{n})_{n\geq1}\in\M\p2(\Fbb,\ell\p2)$.

\subsection{Predictable Representation Property in Progressively Enlarged L\'evy Filtrations}\label{subs:prp.pr.en.Lev}

We now  consider a  L\'evy process $(L,\Fbb\p L)$ with $\Fbb\p L$-characteristics $(\bt,\sig\p2,\nu)$. For a random time $\tau$, $\Gbb=(\Gscr_t)_{t\ge0}$ denotes the progressive enlargement of $\Fbb\p L$ by $\tau$ (see \eqref{eq:def.Gtil}). 

Observe that, if $\tau$ fulfils Assumption \ref{ass:en.fi}, then $\Gscr_0$ is trivial, $\Fscr_0\p L$ being trivial (see Lemma \ref{lem:Z.avoi} (i)). 

We recall that the process $\Lm\p\Gbb$ was defined in \eqref{eq:G.com.gen}. Furthermore, $M:=H-\Lm\p\Gbb\in\Hscr_0\p2(\Gbb)$ and $\aPP{M}{M}:=\Lm\p\Gbb$ (see Theorem \ref{thm:G.com.AH} (ii)).

\begin{theorem}\label{theorem:orth.mart}
Let us consider a random time $\tau$ satisfying Assumption \ref{ass:en.fi} and a L\'evy process $(L,\Fbb\p L)$ with  $\Fbb\p L$-characteristics $(\bt,\sig\p2,\nu)$. Let $\Tscr\subseteq L\p2(\nu)$ be an orthonormal basis. Then, 

\textnormal{(i)} $(L,\Gbb)$ is a L\'evy process with $\Gbb$-characteristics $(\bt,\sig\p2,\nu)$.

\textnormal{(ii)} $\Xscr:=\Xscr_\Tscr\cup\{M\}$ is a family of $\Gbb$-martingales and $\Ebb[X_t\p2]<+\infty$,  $X\in\Xscr$, $t\geq0$.

\textnormal{(iii)} For every $X\in\Xscr_\Tscr$, the identity $[X,M]=0$ holds (up to an evanescent set). In particular, $\Xscr$ consists of pairwise orthogonal $\Gbb$-martingales.
\end{theorem}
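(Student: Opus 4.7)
\textit{Proof plan.} The three parts will be handled sequentially, with hypothesis $(\Hscr)$ driving (i) and (ii) and hypothesis $(\Ascr)$ entering in (iii).

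For (i), the plan is to apply Lemma \ref{char.lev.proc} twice. Since $(L,\Fbb\p L)$ has characteristics $(\bt,\sig\p2,\nu)$, the complex-valued process $Z_t(u):=\exp(\rmi u L_t-t\psi(u))$ is an $\Fbb\p L$-martingale for every $u\in\Rbb$. By hypothesis $(\Hscr)$, $Z(u)$ remains a $\Gbb$-martingale, and Lemma \ref{char.lev.proc} applied now in $\Gbb$ yields that $(L,\Gbb)$ is a L\'evy process with the same characteristics.

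For (ii), the $\Fbb\p L$-martingale property and the square-integrability of $\Ws$ and of $\X{f}$, $f\in\Tscr$, are supplied by Theorem \ref{thm:en.mar} (i)--(ii). These transfer to $\Gbb$ by $(\Hscr)$, while the second moments are preserved since they depend only on the law. The last assertion, $M\in\Hscr\p2_0(\Gbb)$, is Theorem \ref{thm:G.com.AH} (ii).

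Part (iii) is the heart of the argument. If $X=\Ws$, the decomposition $[X,M]_t=\aPP{X\p c}{M\p c}_t+\sum_{s\leq t}\Delta X_s\Delta M_s$ gives $[\Ws,M]=0$ immediately, since $\Ws$ is continuous and $M$ is purely discontinuous. If $X=\X{f}$, then $X$ is also purely discontinuous, so $[\X{f},M]_t=\sum_{s\le t}\Delta\X{f}_s\Delta M_s$. By Theorem \ref{thm:G.com.AH} (i), $\Lm\p\Gbb$ is continuous, hence $\Delta M_s=1_{\{s=\tau\}}$, and the sum collapses to $[\X{f},M]_t=f(\Delta L_\tau)1_{\{\Delta L_\tau\neq0,\,\tau\leq t\}}$. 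The critical step, and the main obstacle, is to show that $\Pbb[\Delta L_\tau\neq0,\,\tau<+\infty]=0$. Because $L$ is $\Fbb\p L$-adapted and c\`adl\`ag, the random set $\{\Delta L\neq0\}$ is thin and optional, hence exhausted by a countable family $(\et_n)_{n\ge1}$ of $\Fbb\p L$-stopping times; hypothesis $(\Ascr)$ provides $\Pbb[\tau=\et_n<+\infty]=0$ for each $n$, and countable additivity concludes this step. Therefore $[\X{f},M]=0$, and $\aPP{\X{f}}{M}=0$ as its $\Gbb$-predictable compensator. Pairwise orthogonality within $\Xscr_\Tscr$ itself survives in $\Gbb$ by a similar immersion argument: if $X,Y\in\Xscr_\Tscr$ are orthogonal as $\Fbb\p L$-martingales (Theorem \ref{thm:en.mar} (iv) and the continuity of $\Ws$), then $XY$ is an $\Fbb\p L$-local martingale and, by $(\Hscr)$, also a $\Gbb$-local martingale, so $\aPP{X}{Y}\p\Gbb=0$ by uniqueness of the compensator.
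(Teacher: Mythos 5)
Your proof is correct and follows essentially the same route as the paper's: (i) via Lemma \ref{char.lev.proc} and hypothesis $(\Hscr)$; (ii) via Theorem \ref{thm:en.mar} applied in $\Gbb$ together with Theorem \ref{thm:G.com.AH}(ii); and (iii) via the jump decomposition of $[\X{f},M]$, the exhaustion of the thin set $\{\Delta L\neq0\}$ by graphs of $\Fbb\p L$-stopping times, and hypothesis $(\Ascr)$. The only cosmetic difference is that for the pairwise orthogonality inside $\Xscr_\Tscr$ you transfer it from $\Fbb\p L$ to $\Gbb$ through the immersion, whereas the paper simply reads it off Theorem \ref{thm:en.mar}(iv) applied with $\Fbb=\Gbb$, which is available since $(L,\Gbb)$ is a L\'evy process by part (i).
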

\begin{proof}
Statement (i) immediately follows from hypothesis $(\Hscr)$ by an application of Lemma \ref{char.lev.proc}. Statement (ii) is a consequence of (i), Theorem \ref{thm:en.mar} with $\Fbb=\Gbb$ and Theorem \ref{thm:G.com.AH} (ii). We are now going to verify (iii). Let $\Tscr=\{f_n,\ n\geq1\}$. According to Theorem \ref{thm:G.com.AH} (i), the process $\Lm\p\Gbb$ is continuous. So, by the definition of $X\p{f_n}$ (see \eqref{eq:en.mar}) we get
\begin{equation}\label{eq:sq.Xf.H}
[\X{f_n},M]_t=[\X{f_n},H]_t=\sum_{s\leq t}f_n(\Delta L_s)\Delta H_s1_{\{\Delta L_s\neq0\}}=\sum_{s\leq t}f_n(\Delta L_s)\Delta H_s1_{\{\Delta L_s\neq0\}\cap\{\Delta H_s\neq0\}}.
\end{equation}
Since  $(L, \Fbb\p L)$ is a L\'evy process, there exists a sequence $(\et_n)_{n\geq1}$ of $\Fbb\p L$-stopping times with pairwise-disjoint graphs such that $\{\Delta L\neq0\}=\bigcup_{n=1}\p\infty[\et_n]$ (see \cite[Proposition I.1.32]{JS00}), where, for a stopping time $\et$, we denote by $[\et]$ the graph of $\et$. Due to (i), this decomposition of the random set $\{\Delta L\neq0\}$ also holds in $\Gbb$. Therefore, because of the identity $\{\Delta H\neq0\}=[\tau]$, we have $\{\Delta L\neq0\}\cap\{\Delta H\neq0\}=\bigcup_{n=1}\p\infty[\et_n]\cap[\tau]$. Because of hypothesis $(\Ascr)$, for every $n\geq1$, we have $\Pbb[\tau=\et_n<+\infty]=0$. Hence, the random sets $[\et_n]\cap[\tau]$, $n\ge1$, are evanescent. This means that $\{\Delta L\neq0\}\cap\{\Delta H\neq0\}$ is an evanescent random set. Therefore, \eqref{eq:sq.Xf.H} yields that $[\X{f_n},M]$ is indistinguishable from zero. Clearly, since by (i) $\Ws$ is a $\Gbb$-Brownian motion, the identity $[\Ws,M]=0$ also holds, $M$ being a $\Gbb$-purely discontinuous martingale.
This means, in particular, that the $\Gbb$-martingale $M$ is orthogonal to the family of pairwise-orthogonal $\Gbb$-martingales $\Xscr_\Tscr$. The proof of the theorem is complete. 
\end{proof}
Before we come to the main theorem of the present paper, we state a preliminary lemma.
\begin{lemma}\label{lem:prel.res}
Let $(\zt\p k)_{k\in\Nbb}$ be a sequence converging in $L\p2(\Om,\Gscr_{\infty},\Pbb)$ to $\zt$. If
\begin{equation}\label{eq:prel.res.seq}
\zt\p k=\Ebb[\zt\p k]+Z\p k\cdot\Ws_{\infty}+\sum_{n=1}\p\infty V\p{n,k}\cdot X\p{f_n}_{\infty}+U\p k\cdot M_{\infty}
\end{equation}
with $Z\p k\in \Lrm\p2(\Gbb,\Ws)$, $V\p k\in \M\p2(\Gbb,\ell\p2)$ and $U\p k\in \Lrm\p2(\Gbb,M)$, $k\in\Nbb$, 
then there exist $Z\in \Lrm\p2(\Ws,\Gbb)$, $V\in \M\p2(\Gbb,\ell\p2)$ and $U\in \Lrm\p2(\Gbb,M)$ such that 
\begin{equation}\label{eq:prel.res}
\zt=\Ebb[\zt]+Z\cdot\Ws_{\infty}+\sum_{n=1}\p\infty V\p{n}\cdot X\p{f_n}_{\infty}+U\cdot M_{\infty}.
\end{equation}
\end{lemma}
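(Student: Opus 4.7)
The plan is to exploit the orthogonality of the martingales $\Ws$, $X\p{f_n}$ ($n\ge 1$) and $M$ (established in Theorem \ref{theorem:orth.mart}) in order to turn $L\p2$-convergence of the terminal values $\zt\p k$ into convergence of the integrands in their respective $L\p2$-spaces, and then pass to the limit in the stochastic integrals by the standard It\^o isometries.

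First I would subtract expectations. Since convergence in $L\p2$ implies convergence of expectations, $\Ebb[\zt\p k]\to\Ebb[\zt]$. Set $N\p k:=\zt\p k-\Ebb[\zt\p k]$ and let $N$ denote the terminal value of the closed $\Gbb$-martingale associated with $\zt-\Ebb[\zt]$. By Theorem \ref{theorem:orth.mart} (iii), the stochastic integrals $Z\p k\cdot\Ws$, $V\p{n,k}\cdot X\p{f_n}$ and $U\p k\cdot M$ are pairwise orthogonal elements of $\Hscr_0\p2(\Gbb)$, so the It\^o isometry gives
\[
\|N\p k-N\p j\|_{\Hscr\p2(\Gbb)}\p2=\Ebb[(Z\p k-Z\p j)\p2\cdot \pb{\Ws}{\Ws}_\infty]+\sum_{n=1}\p{+\infty}\Ebb[(V\p{n,k}-V\p{n,j})\p2\cdot \pb{X\p{f_n}}{X\p{f_n}}_\infty]+\Ebb[(U\p k-U\p j)\p2\cdot \pb{M}{M}_\infty].
\]
Since $(\zt\p k)_k$ is Cauchy in $L\p2$, the left-hand side tends to $0$ as $k,j\to+\infty$, and by non-negativity each of the three summands on the right tends to $0$ separately.

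Next I would extract the limiting integrands using completeness. The first term says $(Z\p k)_k$ is Cauchy in $\Lrm\p2(\Gbb,\Ws)$, which is a Hilbert space, so there exists $Z\in\Lrm\p2(\Gbb,\Ws)$ with $Z\p k\to Z$. The third term gives analogously $U\p k\to U$ in $\Lrm\p2(\Gbb,M)$. For the middle term, I identify $\sum_{n}V\p{n,k}\cdot X\p{f_n}$ with the $\M\p2(\Gbb,\ell\p2)$-integrand $V\p k=(V\p{n,k})_{n\ge1}$: by Theorem \ref{thm:en.mar} (iii)--(iv), $\pb{X\p{f_n}}{X\p{f_m}}_t=\delta_{nm}\,t\int f_n\p2\rmd\nu=\delta_{nm}\,t$ (the basis being orthonormal), hence the middle summand equals $\Ebb\bigl[\int_0\p{+\infty}\|V\p k_s-V\p j_s\|_{\ell\p2}\p2\rmd s\bigr]$. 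So $(V\p k)_k$ is Cauchy, and hence convergent, in $\M\p2(\Gbb,\ell\p2)$ to some $V=(V\p n)_{n\ge1}$.

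Finally I would pass to the limit. By continuity of the three stochastic integration maps, $Z\p k\cdot\Ws\to Z\cdot\Ws$, $\sum_n V\p{n,k}\cdot X\p{f_n}\to\sum_n V\p n\cdot X\p{f_n}$ and $U\p k\cdot M\to U\cdot M$ in $\Hscr_0\p2(\Gbb)$. Evaluating at $t=+\infty$ and passing to the limit in \eqref{eq:prel.res.seq} yields \eqref{eq:prel.res}. No serious obstacle is expected; the only points deserving care are that the infinite sum $\sum_n V\p{n,k}\cdot X\p{f_n}$ is correctly handled via the identification with $\M\p2(\Gbb,\ell\p2)$, and that the orthogonality in Theorem \ref{theorem:orth.mart} (iii) is used in $\Gbb$ (not just in $\Fbb\p L$) to split the $\Hscr\p2$-norm as a sum.
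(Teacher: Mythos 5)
Your proof is correct and follows essentially the same route as the paper: exploit the pairwise orthogonality of $\Ws$, the $\X{f_n}$ and $M$ in $\Gbb$ to split the $L\p2$-norm of $\zt\p k-\zt\p j$ into a sum of non-negative terms, deduce that each sequence of integrands is Cauchy in its respective Hilbert space, extract limits by completeness, and pass to the limit in \eqref{eq:prel.res.seq} via the It\^o isometry. The paper phrases the intermediate step in terms of the projections $\zt\p{i,k}$ rather than writing out the isometry explicitly, but the substance is identical.
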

\begin{proof}
We set $\zt\p{1,k}:=Z\p k\cdot\Ws_\infty$, $\zt\p{2,k}:=\sum_{n=1}\p\infty V\p{n,k}\cdot X\p{f_n}_\infty$ and $\zt\p{3,k}:=U\p k\cdot M_\infty$. Then, for every $j$ and $k$, the estimate $\Ebb[(\zt\p{i,j}-\zt\p{i,k})\p2]\leq\Ebb[(\zt\p j-\zt\p{k})\p2]$ holds. So, $(\zt\p{i,k})_{k\in\Nbb}$ is a Cauchy sequence in $L\p2(\Om,\Gscr_\infty,\Pbb)$, for $i=1,2,3$. Let $\zt\p i$ be the limit in $L\p2(\Om,\Gscr_\infty,\Pbb)$ of $(\zt\p{i,k})_{k\in\Nbb}$, $i=1,2,3$. From the isometry, it follows that $(Z\p k)_{k\in\Nbb}$ is a Cauchy sequence in $\Lrm\p2(\Gbb,\Ws)$, $(V\p{k})_{k\in\Nbb}$ is a Cauchy sequence in $\M\p2(\Gbb,\ell\p2)$ and $(U\p k)_{k\in\Nbb}$ is a Cauchy sequence in $\Lrm\p2(\Gbb,M)$. By $Z$, $V$ and $U$ we denote the respective limit in the respective space of each of these Cauchy sequences and set $\et\p1:=Z\cdot\Ws_\infty$, $\et\p2:=\sum_{n=1}\p\infty V\p n\cdot X\p{f_n}_\infty$, $\et\p3:=U\cdot M_\infty$. Thanks to isometry, we get $\Ebb[(\zt\p i-\et\p i)\p2]=0$ and $\zt\p i=\et\p i$ a.s.,  $i=1,2,3$. Since $\zt=\zt\p1+\zt\p2+\zt\p3$, the proof is complete.
\end{proof}
We now prove the main result of this paper about the PRP with respect to the filtration $\Gbb$.
\begin{theorem}\label{thm:PRP.den.hp}
Let $(L,\Fbb\p L)$ be a L\'evy process with $\Fbb\p L$-characteristics $(\bt,\sig\p2,\nu)$. Denote by $\Tscr=\{f_n,\ n\geq1\}$ an orthonormal basis of $L\p2(\nu)$ and by $\Xscr_\Tscr\subseteq\Hlocz\p2(\Fbb\p L)$ the family of martingales associated with $\Tscr$ (see\ \eqref{eq:mart.fam}). Furthermore, given a random time  $\tau$ satisfying Assumption \ref{ass:en.fi}, let $\Gbb$ denote the progressive enlargement of $\Fbb\p L$ by $\tau$. 
Then, the orthogonal family of $\Gbb$-martingales $\Xscr=\Xscr_\Tscr\cup\{M\}\subseteq\Hlocz\p2(\Gbb)$ possesses the \emph{PRP} with respect to $\Gbb$, that is, every  $X\in \Hscr\p2(\Gbb)$ can be represented as
\begin{equation}\label{eq:rep.mar}
X_t=X_0+Z\cdot\Ws_t+\sum_{n=1}\p\infty V\p n\cdot \X{f_n}_t+U\cdot M_t,\quad t\geq0,
\end{equation}
where $Z\in\Lrm\p2(\Gbb,\Ws)$, $V\in\M\p2(\Gbb,\ell\p2)$ and $U\in \Lrm\p2(\Gbb,M)$. Moreover, this representation is unique, i.e., if there exists another triplet $Z\p\prime\in \Lrm\p2(\Gbb,\Ws)$, $V\p\prime\in\M\p2(\Gbb,\ell\p2)$, $U\p\prime\in \Lrm\p2(\Gbb,M)$ such that \eqref{eq:rep.mar} holds for $(Z\p\prime,V\p\prime, U\p\prime)$ instead of  $(Z,V, U)$, then we have
\[ \|Z-Z\p\prime\|_{\Lrm\p2(\Gbb,\Ws)}=0,\quad \|V-V\p\prime\|_{\M\p2(\Gbb,\ell\p2)}=0\quad \text{and}\quad \|U-U\p\prime\|_{\Lrm\p2(\Gbb,M)}=0.
\]
\end{theorem}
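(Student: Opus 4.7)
The plan is a closure-plus-density argument exploiting the pairwise orthogonality of $\Xscr$ in $\Hscr\p2_{0,\mathrm{loc}}(\Gbb)$ (Theorem \ref{theorem:orth.mart}(iii)). Uniqueness comes essentially for free: if two triples both give $X$, the difference represents the zero element of $\Hscr\p2_0(\Gbb)$; taking predictable quadratic covariations with $\Ws$, each $\X{f_n}$, and $M$ successively, and using the brackets $\aPP{\Ws}{\Ws}_t=\sig\p2 t$, $\aPP{\X{f_n}}{\X{f_n}}_t=t$, $\aPP{M}{M}_t=\Lm\p\Gbb_t$ (Theorem \ref{thm:en.mar}(iii), Theorem \ref{thm:G.com.AH}(ii)), together with the disjointness of the $\Fbb\p L$-jump times and $\tau$ granted by $(\Ascr)$, one reads off the three norm identities at once.

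For existence, Lemma \ref{lem:prel.res} reduces the problem to representing $\xi$ for $\xi$ in a total subset of $L\p2(\Om,\Gscr_\infty,\Pbb)$. Since $\Gscr_\infty=\Fscr\p L_\infty\vee\sig(\tau)$, a standard monotone class argument identifies such a total subset as the products $\xi=Fh(\tau)$ with $F\in L\p\infty(\Om,\Fscr\p L_\infty,\Pbb)$ and $h$ bounded Borel on $(0,+\infty]$. The subcase $h\equiv 1$ is Theorem \ref{thm:prp.lev} combined with $(\Hscr)$, which preserves the $\Fbb\p L$-predictability of the integrands and the martingale property of the integrals.

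For the genuine $\tau$-dependence, by Proposition \ref{prop:imm.eq.for}(ii) and Corollary \ref{cor:con.Z}(i) the conditional law of $\tau$ given $\Fscr\p L_\infty$ is $-\rmd A_s$ on $(0,\infty)$ with atom $A_\infty$ at $+\infty$. Introducing the bounded $\Fbb\p L$-martingales $P_t:=\Ebb[F|\Fscr\p L_t]$, $N_t:=\Ebb[F(K_\infty+h(\infty)A_\infty)|\Fscr\p L_t]$, with $K_t:=\int_0\p t h(s)(-\rmd A_s)$ continuous and $\Fbb\p L$-adapted, the classical conditioning formula on $\{\tau>t\}$ together with $(\Hscr)$ on $\{\tau\leq t\}$ yields
\[
X_t=\Ebb[Fh(\tau)|\Gscr_t]=1_{\{\tau\leq t\}}h(\tau)P_t+\frac{1_{\{\tau>t\}}}{A_t}\bigl(N_t-K_tP_t\bigr).
\]
Representing $P$ and $N$ via the PRP of $\Xscr_\Tscr$ for $\Fbb\p L$ (Theorem \ref{thm:prp.lev}), applying the product rule with $H=M+\Lm\p\Gbb$ and $A,K$ continuous of finite variation, and invoking $(\Ascr)$ to suppress the cross-brackets $[H,P]$ and $[H,N]$ (since $P,N$ jump only on $\{\Delta L\neq 0\}=\bigcup_n\lsi\eta_n\rsi$, which is avoided by $\tau$), one isolates the continuous martingale piece (absorbed into $Z\cdot\Ws+\sum_n V\p n\cdot\X{f_n}$ via the $\Fbb\p L$-Brownian and $\X{f_n}$-integrands of $P$ and $N$) and the purely discontinuous piece with its single jump at $\tau$ (absorbed into $U\cdot M$ with $U_t:=h(t)P_{t-}-A_{t-}\p{-1}(N_{t-}-K_{t-}P_{t-})$ restricted to $\lsi 0,\tau\rsi$).

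The main obstacle will be the bookkeeping in the product-rule computation and the $L\p2$-bounds on $Z$, $V\p n$, and $U$. Square integrability of $U$ follows directly from $\Ebb[U\p2\cdot\Lm\p\Gbb_\infty]=\Ebb[U_\tau\p2 1_{\{\tau<+\infty\}}]=\Ebb[(\Delta X_\tau)\p2 1_{\{\tau<+\infty\}}]\leq 4\|F\|_\infty\p2\|h\|_\infty\p2$, since $\aPP{M}{M}=\Lm\p\Gbb$ is the $\Gbb$-compensator of $H$ and $U$ is $\Gbb$-predictable. Square integrability of the continuous-part integrands is then automatic: once the candidate decomposition is established in $\Hscr_{0,\mathrm{loc}}\p2(\Gbb)$, the hypothesis $X\in\Hscr\p2(\Gbb)$ forces $\Ebb[[X,X]_\infty]<+\infty$, which by the already established orthogonality between the continuous and discontinuous components bounds $\sig\p2\Ebb[\int_0\p\infty Z\p2\,\rmd t]$ and $\sum_n\Ebb[\int_0\p\infty(V\p n)\p2\,\rmd t]$ separately, placing $Z\in\Lrm\p2(\Gbb,\Ws)$ and $V\in\M\p2(\Gbb,\ell\p2)$.
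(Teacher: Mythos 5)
Your proof is correct and follows the paper's overall strategy (reduce via Lemma~\ref{lem:prel.res} and the monotone class theorem to a generating class, use $(\Hscr)$ to pull in the $\Fbb\p L$-PRP of Theorem~\ref{thm:prp.lev}, and use $(\Ascr)$ to kill the cross-brackets with $H$). The difference is in the choice of generating class and the algebra used to represent it. The paper works with the smaller class $\Cscr=\{\xi(1-H_s):\ \xi\ \text{bounded}\ \Fscr\p L_\infty\text{-measurable},\ s\ge0\}$, corresponding to $h=1_{(s,+\infty]}$ in your notation, and exploits the stochastic exponential $Y=\Escr(-M)=A\p{-1}1_{[0,\tau)}$ (Theorem~\ref{thm:G.com.AH}(iii)) to factor $\xi(1-H_s)=X\p\prime_\infty Y\p s_\infty$ with $X\p\prime$ the bounded $\Fbb\p L$-martingale closing $\xi A_s$. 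A single integration by parts plus $[X\p\prime,Y]=0$ then produces the three integrands at once and the $L\p2$-bounds follow from Lemma~\ref{lem:loc.sq.in}. You instead take the larger class $\{Fh(\tau)\}$ with general bounded Borel $h$, write the closed martingale $X_t$ explicitly from the conditional law $-\rmd A_s$ of $\tau$ given $\Fscr\p L_\infty$ under $(\Hscr)$, introduce the two auxiliary $\Fbb\p L$-martingales $P,N$ and the continuous finite-variation processes $A,K$, and apply the product rule; the predictable finite-variation residue must then vanish by uniqueness of the special semimartingale decomposition. This is more bookkeeping (and you implicitly rely on the fact that $\Fbb\p L$-martingales jump only on $\{\Delta L\neq0\}$, which you should state and justify, as it is used to argue both $[H,P]=[H,N]=0$ and $P_\tau=P_{\tau-}$), but it produces more explicit formulas for the integrands and avoids the exponential trick. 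The $L\p2$-integrability of the integrands you get from orthogonality and $\Ebb[\aPP{X}{X}_\infty]\le\Ebb[X_\infty\p2]<+\infty$, which is fine. Both routes work; the paper's is shorter because the class $\Cscr$ is tailor-made for the factorization through $Y$.
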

\begin{proof}
 First we observe that the uniqueness statement is an immediate consequence of the isometry induced by each one of the stochastic integrals on the right-hand side of \eqref{eq:rep.mar} between $\Hscr\p2(\Gbb)$ and the spaces $\Lrm\p2(\Gbb,\Ws)$, $\M\p2(\Gbb,\ell\p2)$ and $\Lrm\p2(\Gbb,M)$, respectively. 

We now show the representation formula \eqref{eq:rep.mar}. More precisely, as an application of the monotone class theorem, we are going to show that every $\zt\in L\p2(\Om,\Gscr_\infty,\Pbb)$ can be represented as
\begin{equation}\label{eq:rep.aux}
\zt=\Ebb[\zt]+Z\cdot\Ws_{\infty}+\sum_{n=1}\p\infty V\p{n}\cdot X\p{f_n}_{\infty}+U\cdot M_{\infty}.
\end{equation}
Since $(\Hscr\p2(\Gbb),\|\cdot\|_{\Hscr\p2(\Gbb)})$ and $(L\p2(\Om,\Gscr_\infty,\Pbb),\|\cdot\|_2)$ are isomorphic, where $\|\cdot\|_2$ denotes the usual $L\p2$-norm, \eqref{eq:rep.aux} is equivalent to \eqref{eq:rep.mar}.

To begin with, we observe that the system
\[
\Cscr:=\big\{\zt=\xi(1-H_{s}),\quad \xi\ \ \Fscr_\infty\p L\textnormal{-measurable and bounded, }\ s\geq0\big\}
\]
generates $\Gscr_\infty$. Indeed, we clearly have the inclusion $\sig(\Cscr)\subseteq\Gscr_\infty$. Conversely, taking $s=0$ we have $H_0=0$ and hence $\Fscr_\infty\p L\subseteq\sig(\Cscr)$. Additionally, taking $\xi=1$, we get $H_s=1-(1-H_s)$, for every $s\geq0$, and hence $\Hscr_\infty\subseteq\sig(\Cscr)$. Consequently, $\Gscr_\infty=\Fscr^L_\infty\vee\Hscr_\infty\subseteq \sigma(\Cscr)$. Furthermore, the system $\Cscr$ is stable under multiplication. 

As a first step, we verify \eqref{eq:rep.aux} for random variables in $\Cscr$. Let $s\geq0$ be arbitrary but fixed. Recalling the definition of $A$ in \eqref{eq:az.supm}, we define the random variable $\xi\p\prime:=\xi A_s$, where $\xi$ is a bounded and $\Fscr\p L_\infty$-measurable random variable. As $\xi\p\prime$ is bounded, the $\Gbb$-martingale $X\p\prime$ satisfying $X\p\prime_t=\Ebb[\xi\p\prime|\Gscr_t]$ a.s., $t\geq0$, is bounded. By hypothesis $(\Hscr)$ and Proposition \ref{prop:imm.eq.for} (iii), $\xi\p\prime$ being $\Fscr_\infty\p L$-measurable, we have $\Ebb[\xi\p\prime|\Gscr_t]=\Ebb[\xi\p\prime|\Fscr\p L_t]$ and hence $X\p\prime$ belongs to $\Hscr\p2(\Fbb\p L)$. Therefore, $X\p\prime$ can be represented as in \eqref{eq:prp.Fl}. Since $Y:=\Escr(-M)$ satisfies the Dol\'eans-Dade equation with respect to $-M$, by \eqref{eq:prp.Fl}, the properties of the quadratic covariation and Theorem \ref{theorem:orth.mart} (iii), we get
\begin{equation}\label{eq:cov.Xpr.Y} 
[X\p\prime,Y]=\Big[X_0+Z\cdot\Ws+\sum_{n=1}\p\infty V\p n\cdot \X{f_n},1-Y_-\cdot M\Big]=-\sum_{n=1}\p\infty V\p nY_-\cdot[\X{f_n}, M]=0.
\end{equation} 
 Furthermore, Theorem \ref{thm:G.com.AH} (iii) yields
\begin{equation}\label{eq:aux.rep}
\xi(1-H_s)=\xi1_{[0,\tau)}(s)=\xi1_{[0,\tau)}(s)A_sA\p{-1}_s=\xi\p\prime Y_s=X\p\prime_\infty Y\p s_\infty,
\end{equation}
where  $Y\p s$ denotes the process $Y$ stopped at $s$, i.e., $Y\p s_t:=Y_{t\wedge s}$. We stress that \eqref{eq:aux.rep} is well-posed because, in view of Theorem \ref{thm:con.Z} (iii), we have $\ A>0$ on $[0,\tau]$. 

We now denote by $X$ the bounded $\Gbb$-martingale satisfying $X_t=\Ebb[\xi(1-H_s)|\Gscr_t]$ a.s., $t\ge0$. According to \eqref{eq:aux.rep}, using the formula of integration by parts and \eqref{eq:cov.Xpr.Y}, we compute
\begin{equation}\label{eq:rep.C}
\begin{split}
X_\infty&=\xi(1-H_s)
\\&=X\p\prime_0+Y_-\p s\cdot X\p\prime_\infty+X\p\prime_-1_{[0,s]}\cdot Y_\infty
\\&=X\p\prime_0+Y_-\p sZ\cdot\Ws_\infty+\sum_{n=1}\p{+\infty}Y_-\p sV\p n\cdot\X{f_n}_\infty-X\p\prime_-Y_-1_{[0,s]}\cdot M_\infty,
\end{split}
\end{equation}
where, in the last identity, we again used \eqref{eq:prp.Fl} for $X\p\prime$ and the properties of the stochastic integral.

Because $X$ is a bounded martingale, $\aPP{X}{X}_\infty$ is integrable. We now set $X\p1:=Y_-\p sZ\cdot\Ws$, $X\p2:=\sum_{n=1}\p{+\infty}Y_-\p sV\p n\cdot\X{f_n}$ and $X\p3:=Y_-1_{[0,s]}\cdot M$. Then, in view of Theorem \ref{theorem:orth.mart} (ii), $X\p i$ belongs to $\Hlocz\p2(\Gbb)$, $i=1,2,3$.  Theorem \ref{theorem:orth.mart} (iii) and \eqref{eq:rep.C} now yield
\[
\aPP{X}{X}_\infty=\aPP{X\p 1}{X\p 1}_\infty+\aPP{X\p 2}{X\p 2}_\infty+\aPP{X\p 3}{X\p 3}_\infty.
\]
Hence, $\aPP{X\p i}{X\p i}_\infty\leq\aPP{X}{X}_\infty$ is integrable and $X\p i\in\Hscr_0\p2(\Gbb)$ by Lemma \ref{lem:loc.sq.in}, $i=1,2,3$. As a consequence of \cite[Proposition 2.48 (b)]{J79}, the properties of $\M(\Gbb,\ell\p2)$, and Theorem \ref{thm:en.mar} (iii), we deduce 
\[
Y_-\p sZ\in \Lrm\p2(\Gbb,\Ws),\quad Y_-\p sV\in\M\p2(\Gbb,\ell\p2),\quad X\p\prime_-Y_-1_{[0,s]}\in\Lrm\p2(\Gbb,M).
\]
The proof of \eqref{eq:rep.aux} for elements in $\Cscr$ is now complete.

Let us denote by $\Kscr$ the class of all bounded $\Gscr_\infty$-measurable random variables $\zt$ which can be represented as in  \eqref{eq:rep.aux}. Clearly, because of the previous step, the inclusion $\Cscr\subseteq\Kscr$ holds. Furthermore, $\Kscr$ is a monotone class of $\Gscr_\infty$-measurable random variables. To see this, we consider $(\zt\p k)_{k\in\Nbb}\subseteq\Kscr$ such that $0\leq\zt\p k\leq\zt\p{k+1}\leq c$, $c>0$, and $\zt\p k\uparrow\zt$, $k\rightarrow+\infty$. So $\zt$ is $\Gscr_\infty$-measurable and bounded. We have to show that $\zt$ belongs to $\Kscr$. But this immediately follows from Lemma \ref{lem:prel.res}, observing that, as an application of Lebesgue's dominated convergence theorem, $(\zt\p k)_{k\in\Nbb}$ converges to $\zt$ also in $L\p2(\Om,\Gscr_\infty,\Pbb)$. By the monotone class theorem for functions (see \cite[Theorem 1.4]{HWY92}), we can conclude that $\Kscr$ contains all bounded $\Gscr_\infty$-measurable random variables.

Now, using Lemma \ref{lem:prel.res} again, it is a standard procedure to obtain \eqref{eq:rep.aux} for every  $\zt\in L\p2(\Om,\Gscr_\infty,\Pbb)$, $\zt\geq0$: Indeed, $\zt\p k:=\zt\wedge k$ is bounded and, by Lebesgue's dominated convergence theorem, $(\zt\p k)_{k\geq1}$ converges to $\zt$ in $L\p2(\Om,\Gscr_\infty,\Pbb)$. Because of the previous step, $\zt\p k$ can be represented as in \eqref{eq:prel.res.seq} and therefore, from Lemma \ref{lem:prel.res}, $\zt$ has the representation \eqref{eq:rep.aux}. For an arbitrary $\zt\in L\p2(\Om,\Gscr_\infty,\Pbb)$, we have $\zt=\zt\p +-\zt\p -$ and $\zt\p\pm\in L\p2(\Om,\Gscr_\infty,\Pbb)$, $\zt\p\pm\geq0$. So, by the previous step and by the linearity of the stochastic integral, $\zt$ can be represented as in \eqref{eq:rep.aux}. The proof of the theorem is complete.
\end{proof}
Lemma \ref{lem:prel.res} and Theorem \ref{thm:PRP.den.hp} are formulated for the progressively enlarged L\'evy filtration $\Fbb\p L$ and the family $\Xscr_\Tscr$, where $\Tscr\subseteq L\p2(\nu)$ is an orthonormal basis. However, for any filtration $\Fbb$ satisfying the usual conditions, the proofs of Lemma \ref{lem:prel.res} and Theorem \ref{thm:PRP.den.hp} work without any difference also for any countable family $\Xscr=\{X\p n,\ n\geq1\}\subseteq\Hscr\p2(\Fbb)$ of mutually orthogonal $\Fbb$-martingales possessing the PRP with respect to $\Fbb$. So, the following more general result holds:

\begin{theorem}\label{thm:PRP.den.hp.gen}
Let $\Fbb$ be a filtration satisfying the usual conditions. Suppose that $\Xscr=\{X\p n,\ n\geq1\}\in\Hscr\p2(\Fbb)$\footnote{It is actually sufficient that $\Xscr$ is a family of martingales such that $\Ebb[(X\p n_t)\p2]<+\infty$, for every $t\geq0$, $n\geq1$.} is an arbitrary family of mutually orthogonal $\Fbb$-martingales possessing the \emph{PRP} with respect to $\Fbb$. Consider a random time $\tau$ meeting Assumption \ref{ass:en.fi}, and let $\Gbb$ be the progressive enlargement of $\Fbb$ by $\tau$. Then,

\textnormal{(i)} $\Xscr$ is an orthogonal family of square integrable $\Gbb$-martingales. 

\textnormal{(ii)} For every $n\geq1$, the identity $[X\p n,M]=0$ holds. Hence $X\p n,\ M\in\Hscr\p2(\Fbb)$ are orthogonal.  

\textnormal{(iii)} Every  $X\in \Hscr\p2(\Gbb)$ can be represented as
\[
X_t=X_0+\sum_{n=1}\p\infty V\p n\cdot X\p{n}_t+U\cdot M_t,\quad t\ge0,
\]
where  $V\p n\in\Lrm\p2(\Gbb,X\p n)$, $n\geq1$, and $U\in \Lrm\p2(\Gbb,M)$. Furthermore, $V\p n$ and $U$ are $\aPP{X\p n}{X\p n}\otimes\Pbb$-a.e.\ and $\aPP{M}{M}\otimes\Pbb$-a.e.\ unique on $\Rbb_+\times\Om$, respectively.
\end{theorem}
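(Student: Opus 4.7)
The three claims parallel Theorem \ref{theorem:orth.mart} and Theorem \ref{thm:PRP.den.hp}, and the plan is to replay those proofs verbatim, replacing the L\'evy-specific family $\Xscr_\Tscr$ by the abstract $\Xscr$ and invoking its $\Fbb$-PRP in place of Theorem \ref{thm:prp.lev}. Part (i) is a one-liner: by hypothesis $(\Hscr)$ each $X\p n$ remains a square-integrable $\Gbb$-martingale, and if $X\p nX\p m$ is an $\Fbb$-local martingale for $n\neq m$, then it is a $\Gbb$-local martingale too, so $\aPP{X\p n}{X\p m}\p\Gbb=0$ by uniqueness of the predictable covariation.

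For (ii), I would replicate the argument of Theorem \ref{theorem:orth.mart}(iii). Because $\Lm\p\Gbb$ is continuous by Theorem \ref{thm:G.com.AH}(i) under Assumption \ref{ass:en.fi}, the jumps of $M=H-\Lm\p\Gbb$ coincide with those of $H$ and equal $1$ at $\tau$, so
\[
[X\p n,M]_t=\sum_{s\leq t}\Delta X\p n_s\,\Delta M_s=\Delta X\p n_\tau\,1_{\{\tau\leq t\}}.
\]
The random set $\{\Delta X\p n\neq0\}$ is a thin $\Fbb$-optional set, hence exhausted by a sequence $(\et\p n_k)_{k\geq1}$ of $\Fbb$-stopping times with pairwise-disjoint graphs; hypothesis $(\Ascr)$ yields $\Pbb[\tau=\et\p n_k<+\infty]=0$ for every $k$, so $[\tau]\cap\{\Delta X\p n\neq0\}$ is evanescent and $[X\p n,M]\equiv0$, which in turn forces $\aPP{X\p n}{M}\p\Gbb=0$.

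For (iii), I would follow the proof of Theorem \ref{thm:PRP.den.hp} essentially verbatim. Through the isometry $\Hscr\p2(\Gbb)\simeq L\p2(\Om,\Gscr_\infty,\Pbb)$, it suffices to show that every $\zt\in L\p2(\Om,\Gscr_\infty,\Pbb)$ admits the expansion
\[
\zt=\Ebb[\zt]+\sum_{n=1}\p\infty V\p n\cdot X\p n_\infty+U\cdot M_\infty.
\]
The $\pi$-system $\Cscr=\{\xi(1-H_s):\xi\in L\p\infty(\Fscr_\infty),\ s\geq0\}$ generates $\Gscr_\infty$, so by the functional monotone class theorem, the transparent adaptation of Lemma \ref{lem:prel.res} to the present family, truncation, and linearity, it is enough to handle $\zt=\xi(1-H_s)$. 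Setting $\xi\p\prime:=\xi A_s$ (bounded, since $A\leq1$), the martingale $X\p\prime_t:=\Ebb[\xi\p\prime|\Gscr_t]$ equals $\Ebb[\xi\p\prime|\Fscr_t]$ by Proposition \ref{prop:imm.eq.for}(iii) and therefore belongs to $\Hscr\p2(\Fbb)$; the $\Fbb$-PRP of $\Xscr$ then expands $X\p\prime=X\p\prime_0+\sum_n V\p n\cdot X\p n$ in $\Hscr\p2(\Fbb)$. Using $Y=\Escr(-M)=A\p{-1}1_{[0,\tau)}$ from Theorem \ref{thm:G.com.AH}(iii), I rewrite $\xi(1-H_s)=X\p\prime_\infty Y\p s_\infty$; integration by parts, together with $[X\p\prime,M]=0$, then produces the desired representation, and the square-integrability of the three components is verified through Lemma \ref{lem:loc.sq.in} exactly as in the L\'evy case. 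Uniqueness of $V\p n$ and $U$ up to $\aPP{X\p n}{X\p n}\otimes\Pbb$ and $\aPP{M}{M}\otimes\Pbb$, respectively, follows from the $L\p2$-isometries of the stochastic integrals.

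The main obstacle is the justification of $[X\p\prime,M]=0$: part (ii) provides this termwise, and passing to the infinite sum $X\p\prime_0+\sum_n V\p n\cdot X\p n$ by continuity of the covariation in $\Hscr\p2$ is technically delicate. The cleanest workaround is to observe that $X\p\prime$ is itself a bounded $\Fbb$-martingale, so its own jump set $\{\Delta X\p\prime\neq0\}$ is a thin $\Fbb$-optional set that can be exhausted by $\Fbb$-stopping times avoided by $\tau$ under $(\Ascr)$; the argument of (ii) applied directly to $X\p\prime$ then yields $[X\p\prime,M]=0$ with no recourse to the series expansion.
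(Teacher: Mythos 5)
Your proposal is correct and follows exactly the route the paper prescribes — the paper itself gives no separate proof but states that the arguments of Lemma~\ref{lem:prel.res} and Theorem~\ref{thm:PRP.den.hp} carry over verbatim to an abstract orthogonal family with the PRP, and your three parts replay those arguments faithfully (the thin-set argument in (ii) is the general version of the L\'evy-specific one in Theorem~\ref{theorem:orth.mart}(iii)). Your closing observation is a genuine improvement: obtaining $[X\p\prime,M]=0$ directly from the thin-set / avoidance argument applied to the bounded $\Fbb$-martingale $X\p\prime$ neatly sidesteps the interchange of $[\cdot,\cdot]$ with the infinite series in \eqref{eq:cov.Xpr.Y}, which the paper's version of \eqref{eq:cov.Xpr.Y} leaves implicit.
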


\section{The multiplicity of a progressively enlarged filtration}\label{subs:sp.c.bf}
In this section we study the multiplicity of the progressively enlarged Brownian filtration. More precisely, for a Brownian motion $W\p\sig$ we consider the progressive enlargement $\Gbb$ of  $\Fbb\p{W\p\sig}$ by a random time $\tau$ satisfying Assumption \ref{ass:en.fi}. Because of Assumption \ref{ass:en.fi} and Theorem \ref{thm:PRP.den.hp}, $M=H-\Lm\p\Gbb$ and $\Ws$ are orthogonal $\Gbb$-local martingales, $\Ws$ being continuous and $M$ purely discontinuous, and the family $\{\Ws,M\}$ has the PRP with respect to $\Gbb$. Hence, one could be tempted to think that the multiplicity of $\Gbb$ (see Definition \ref{def:mult} below) is equal to two. However, as we are going to see now,  this is not the case: Surprisingly, it can happen that the multiplicity of $\Gbb$ is equal to one.

We stress that the propagation of the PRP of a Brownian motion to the progressively enlarged Brownian filtration has been studied by Kusuoka in \cite{Ku99} under some conditions which are stronger than those of Theorem \ref{thm:PRP.den.hp}. In particular, Kusuoka requires in \cite{Ku99} that $\tau$ satisfies hypothesis $(\Hscr)$ and that the $\Gbb$-compensator $\Lm\p\Gbb$ of $\tau$ is absolutely continuous with respect to the Lebesgue measure. In Proposition \ref{prop:avoid.bf} below, we show that this latter assumption on $\Lm\p\Gbb$ implies hypothesis $(\Ascr)$, i.e., that the assumptions of Kusuoka imply our  Assumption \ref{ass:en.fi}. As we are going to prove, the multiplicity of $\Gbb$ reduces to one if and only if $\Lm\p\Gbb$ is singular continuous. Therefore, the study of the multiplicity of $\Gbb$ presented in this part is only possible because of our main result, that is, Theorem \ref{thm:PRP.den.hp}, and cannot be obtained from the results by Kusuoka in \cite{Ku99}.  

Finally, we notice that for this part we need some general properties of stochastic integrals which are of independent interest, not only in view of the theory of enlargement of filtrations. These properties are therefore discussed in Subsection \ref{subs:gen.con} in a general context. Then, in Subsection \ref{subs:mul.bf}, the results of Subsection \ref{subs:gen.con} are applied to study, in particular, the multiplicity of the enlarged Brownian filtration.

\subsection{General considerations}\label{subs:gen.con}

Let $\Fbb$ denote an arbitrary filtration satisfying the usual conditions. To begin with, we recall the definition of the multiplicity of $\Fbb$, for the first time introduced by Davis and Varaiya in \cite{DV74}. 
\begin{definition}\label{def:mult}
The multiplicity $n$ of $\Fbb$ is defined as the minimal number of orthogonal locally square integrable $\Fbb$-local martingales $X\p1,\ldots,X\p n$ which is necessary to represent each element in $\Hscr\p2(\Fbb)$ as an orthogonal sum of stochastic integrals with respect to $X\p1,\ldots,X\p n$.
\end{definition}

We now assume that $\Fbb$ supports two orthogonal local martingales $X,Y\in\Hscr_{\mathrm{loc}}\p2(\Fbb)$ such that the family $\{X,Y\}$ possesses the PRP with respect to $\Fbb$. We are going to establish necessary and sufficient conditions on $X$ and $Y$ for the filtration $\Fbb$ to have multiplicity equal to one. 

The following definition is similar to the one of mutually singular measures on $\Pscr(\Fbb)$ given in \cite[Lemma 6]{WG82}. 
\begin{definition}\label{def:sing.meas} Let $X,Y\in\Hscr\p2_\mathrm{loc}(\Fbb)$.
We say that $\aPP{X}{X}$ and $\aPP{Y}{Y}$ are mutually singular on $\Pscr(\Fbb)$, if there exist a set $D\in\Pscr(\Fbb)$ such that $\aPP{X}{X}=1_{D}\cdot\aPP{X}{X}$ and $\aPP{Y}{Y}=1_{D\p c}\cdot\aPP{Y}{Y}$ (up to an evanescent set).
\end{definition} 
We now proceed with a general property of stochastic integrals with respect to a locally square integrable local martingale.
\begin{lemma}\label{lem:converse}
Let $N\in\Hscr\p2_{\mathrm{loc}}(\Fbb)$. Define $X:=U\cdot N$ and $Y:=V\cdot N$, for $U,V\in\Lrm_{\mathrm{loc}}\p2(\Fbb,N)$. Then, $X,Y\in\Hscr\p2_\mathrm{loc}(\Fbb)$ and, moreover, $X$ is orthogonal to $Y$ if and only if $\aPP{X}{X}$ and $\aPP{Y}{Y}$ are mutually singular on $\Pscr(\Fbb)$. 
\end{lemma}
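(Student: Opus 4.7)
The plan is to reduce everything to the identities
\[
\aPP{X}{X}=U^2\cdot\aPP{N}{N},\quad \aPP{Y}{Y}=V^2\cdot\aPP{N}{N},\quad \aPP{X}{Y}=UV\cdot\aPP{N}{N},
\]
which hold in $\Hscr^2_{\mathrm{loc}}(\Fbb)$ by the very characterization of the stochastic integral recalled in Subsection \ref{subs:prp}. In particular, $X,Y\in\Hscr^2_{\mathrm{loc}}(\Fbb)$ follows at once, so the point is to translate the vanishing of $\aPP{X}{Y}$ into a support statement for the predictable random measures $\aPP{X}{X}(\d\om,\d t)$ and $\aPP{Y}{Y}(\d\om,\d t)$ on $\Pscr(\Fbb)$.

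First I would handle the \emph{only if} part. Assuming $\aPP{X}{Y}=0$, the identity above gives $UV\cdot\aPP{N}{N}=0$ (up to indistinguishability), hence $UV=0$ $\aPP{N}{N}\otimes\Pbb$-a.e. on $\Rbb_+\times\Om$. Set
\[
D:=\{(t,\om)\in\Rbb_+\times\Om: U_t(\om)\neq 0\}\in\Pscr(\Fbb).
\]
On $D^c$ one has $U=0$, so $U^2=U^2 1_D$ $\aPP{N}{N}\otimes\Pbb$-a.e., which yields $\aPP{X}{X}=1_D\cdot\aPP{X}{X}$. On $D$ one has $U\neq 0$, so $UV=0$ forces $V=0$ there, hence $V^2=V^2 1_{D^c}$ $\aPP{N}{N}\otimes\Pbb$-a.e.\ and therefore $\aPP{Y}{Y}=1_{D^c}\cdot\aPP{Y}{Y}$. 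Thus $\aPP{X}{X}$ and $\aPP{Y}{Y}$ are mutually singular on $\Pscr(\Fbb)$ in the sense of Definition~\ref{def:sing.meas}.

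For the \emph{if} direction, suppose such a predictable set $D$ exists. The equality $\aPP{X}{X}=1_D\cdot\aPP{X}{X}$ means $1_{D^c}U^2\cdot\aPP{N}{N}=0$, so $U=0$ on $D^c$ $\aPP{N}{N}\otimes\Pbb$-a.e.; analogously $V=0$ on $D$ $\aPP{N}{N}\otimes\Pbb$-a.e. Consequently $UV=0$ $\aPP{N}{N}\otimes\Pbb$-a.e., so
\[
\aPP{X}{Y}=UV\cdot\aPP{N}{N}=0,
\]
which is orthogonality of $X$ and $Y$ (the case $X_0Y_0=0$ being automatic since both integrals start at zero).

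The only subtlety is the equivalence between the identity $\aPP{X}{X}=1_D\cdot\aPP{X}{X}$ as increasing processes and the pointwise statement $U=0$ on $D^c$ with respect to $\aPP{N}{N}\otimes\Pbb$; this is just the uniqueness of densities for $\sigma$-finite measures on $\Pscr(\Fbb)$ and causes no real difficulty, but is the one place where a short argument using a localizing sequence for $\aPP{N}{N}$ should be spelled out to keep everything inside $L^1$.
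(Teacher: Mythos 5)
Your proof is correct and follows essentially the same strategy as the paper's: it uses the same density identities $\aPP{X}{X}=U^2\cdot\aPP{N}{N}$, $\aPP{Y}{Y}=V^2\cdot\aPP{N}{N}$, $\aPP{X}{Y}=UV\cdot\aPP{N}{N}$, the same localization to $\Hscr^2$, and the same choice of predictable set $D=\{U\neq 0\}$. The only cosmetic difference is that the paper packages the argument through the auxiliary measures $m_X$, $m_Y$, $m_{XY}$ on $\Pscr(\Fbb)$ (working with the total variation $\Var(\aPP{X}{Y})=|UV|\cdot\aPP{N}{N}$) and, in the direction ``orthogonality implies mutual singularity'', establishes $\aPP{Y}{Y}=1_{D^c}\cdot\aPP{Y}{Y}$ via the uniqueness of the dual predictable projection; you instead argue directly that $V^21_D=0$ $\aPP{N}{N}\otimes\Pbb$-a.e.\ forces $V^21_D\cdot\aPP{N}{N}$ to vanish up to indistinguishability, which is a legitimate and slightly tidier shortcut. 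The subtlety you flag at the end (passage between a.e.\ equality of densities and indistinguishability of the increasing integral processes, using nonnegativity and localization to keep everything in $L^1$) is exactly the point the paper handles with its dual projection argument, so your proposal is sound.
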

\begin{proof}
By localization, we can assume that $N\in\Hscr\p2(\Fbb)$ and $U,V\in\Lrm\p2(\Fbb,N)$. Hence, we have $X,Y\in\Hscr\p2(\Fbb)$. We denote by $m_X$, $m_Y$ and $m_{XY}$ the measures on $(\Rbb_+\times\Om,\Pscr(\Fbb))$ defined by
\[
m_X(B)=\Ebb[1_B\cdot\aPP{X}{X}_\infty],\quad m_Y(B)=\Ebb[1_B\cdot\aPP{Y}{Y}_\infty]\quad\text{and}\quad m_{XY}(B)=\Ebb[1_B\cdot\Var(\aPP{X}{Y})_\infty],
\]
for $B\in\Pscr(\Fbb)$, $\Var(\aPP{X}{Y})$ denoting the total-variation  process of $\aPP{X}{Y}$. Notice that  the identities $\aPP{X}{X}=U\p2\cdot\aPP{N}{N}$, $\aPP{Y}{Y}=V\p2\cdot\aPP{N}{N}$ and $\Var(\aPP{X}{Y})=|UV|\cdot\aPP{N}{N}$ hold.

We now assume that $\aPP{X}{X}$ and $\aPP{Y}{Y}$ are mutually singular on $\Pscr(\Fbb)$ and show that $\aPP{X}{Y}=0$ holds. For this, we consider $D\in\Pscr(\Fbb)$ such that 
\[
\aPP{X}{X}=1_D\cdot\aPP{X}{X},\quad\aPP{Y}{Y}=1_{D\p c}\cdot\aPP{Y}{Y}.
\]
 We then have
\[
U\p2=\frac{\rmd\, m_X}{\rmd\,\aPP{N}{N}\otimes\Pbb}=1_DU\p2,\quad V\p2=\frac{\rmd\, m_Y}{\rmd\,\aPP{N}{N}\otimes\Pbb}=1_{D\p c}V\p2,\quad |UV|=\frac{\rmd\, m_{XY}}{\rmd\,\aPP{N}{N}\otimes\Pbb}. 
\]

By the uniqueness of the densities, we get $|U|=1_{D}|U|$ and $|V|=1_{D\p c}|V|$ $\aPP{N}{N}\otimes\Pbb$-a.e. Therefore, we deduce $m_{XY}(B)=0$ for every $B\in\Pscr(\Fbb)$. In particular, we can choose $B=\Rbb_+\times\Om$, which yields $\Var(\aPP{X}{Y})_\infty=0$ a.s. Therefore, $\Var(\aPP{X}{Y})=0$ and hence $\aPP{X}{Y}=0$ (up to an evanescent set).

Conversely, we now assume that $\aPP{X}{Y}=0$ holds. 
By the definition of $m_{XY}$, this yields $|UV|=0$ $\aPP{N}{N}\otimes\Pbb$-a.e. We consider the set $D=\{(t,\om)\in\Rbb_+\times\Om:U_t(\om)\neq0\}$ which clearly satisfies $D\in\Pscr(\Fbb)$, $U$ being predictable. We then have $\aPP{X}{X}=1_D\cdot\aPP{X}{X}$. Since $1_DV=0$ $\aPP{N}{N}\otimes\Pbb$-a.e., $m_Y(1_{D\cap B})=0$ holds, for every $B\in\Pscr(\Fbb)$. So, introducing the $\Fbb$-predictable \cadlag increasing and integrable process $K=1_{D\p c}\cdot\aPP{Y}{Y}=1_{D\p c}V\p2\cdot\aPP{N}{N}$, we see that $K$ satisfies $K_0=0$ and $\Ebb[1_B\cdot K_\infty]=\Ebb[1_B\cdot\aPP{Y}{Y}_\infty]$ for every $B\in\Pscr(\Fbb)$. This yields $\Ebb[ K_\et]=\Ebb[\aPP{Y}{Y}_\et]$ for every $\Fbb$-stopping time $\et$ (take $B=[0,\et]$). But then, by the predictability of $\aPP{Y}{Y}$ and the uniqueness of the dual predictable projection, $K$ and $\aPP{Y}{Y}$ are indistinguishable. The proof of the lemma is complete.
\end{proof}
We are now ready to prove the following result.
\begin{theorem}\label{thm:prp.sum}
Let $X,Y\in\Hscr\p2_{\mathrm{loc}}(\Fbb)$ be orthogonal and such that the family $\{X,Y\}$ possesses the \emph{PRP} with respect to $\Fbb$. Then, the following statements are equivalent

\textnormal{(i)} The multiplicity of the filtration $\Fbb$ is equal to one.

\textnormal {(ii)} The processes $\aPP{X}{X}$ and $\aPP{Y}{Y}$ are mutually singular on $\Pscr(\Fbb)$.

\textnormal{(iii)} The local martingale $Z\in\Hscr\p2_{\mathrm{loc}}(\Fbb)$ defined by $Z:=X+Y$ possesses the \emph{PRP} with respect to $\Fbb$.
\end{theorem}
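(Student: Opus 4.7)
The plan is to establish the cyclic chain of implications (iii) $\Rightarrow$ (i) $\Rightarrow$ (ii) $\Rightarrow$ (iii). The first implication is essentially the definition: if the single martingale $Z \in \Hscr^2_\mathrm{loc}(\Fbb)$ already realizes a PRP for all of $\Hscr^2(\Fbb)$, then Definition \ref{def:mult} gives that the multiplicity of $\Fbb$ is at most one (and since $\{X,Y\}$ is assumed to furnish a nontrivial PRP, the multiplicity is exactly one).

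For (i) $\Rightarrow$ (ii), I would assume the existence of some $N \in \Hscr^2_\mathrm{loc}(\Fbb)$ with the PRP (by localization the representation extends from $\Hscr^2(\Fbb)$ to $\Hscr^2_\mathrm{loc}(\Fbb)$). Since $X,Y \in \Hscr^2_{0,\mathrm{loc}}(\Fbb)$ and have the same initial value as the stochastic integrals, I can write $X = U \cdot N$ and $Y = V \cdot N$ for some $U,V \in \Lrm^2_\mathrm{loc}(\Fbb,N)$. The hypothesis $\aPP{X}{Y} = 0$ together with the "converse" direction of Lemma \ref{lem:converse} then yields that $\aPP{X}{X}$ and $\aPP{Y}{Y}$ are mutually singular on $\Pscr(\Fbb)$.

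The core work is (ii) $\Rightarrow$ (iii). Let $D \in \Pscr(\Fbb)$ be the set provided by Definition \ref{def:sing.meas}, so that $\aPP{X}{X} = 1_D \cdot \aPP{X}{X}$ and $\aPP{Y}{Y} = 1_{D^c} \cdot \aPP{Y}{Y}$. The key identification is that $X$ and $1_D \cdot Z$ coincide up to indistinguishability; once this is established, the substitution scheme for representations is straightforward. Both processes vanish at zero, so it suffices to show the difference has vanishing predictable bracket with itself. Using orthogonality of $X$ and $Y$ one computes
\begin{equation*}
\aPP{Z}{X} = \aPP{X}{X} + \aPP{Y}{X} = \aPP{X}{X}, \qquad \aPP{Z}{Z} = \aPP{X}{X} + \aPP{Y}{Y},
\end{equation*}
and the singularity condition gives $1_D \cdot \aPP{Z}{Z} = \aPP{X}{X}$, whence
\begin{equation*}
\aPP{X - 1_D \cdot Z}{X - 1_D \cdot Z} = \aPP{X}{X} - 2 \cdot 1_D \cdot \aPP{Z}{X} + 1_D \cdot \aPP{Z}{Z} = 0.
\end{equation*}
Thus $X = 1_D \cdot Z$, and symmetrically $Y = 1_{D^c} \cdot Z$.

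Given $N \in \Hscr^2(\Fbb)$, the PRP of $\{X, Y\}$ supplies integrands $K \in \Lrm^2(\Fbb, X)$ and $L \in \Lrm^2(\Fbb, Y)$ with $N = N_0 + K \cdot X + L \cdot Y$. Substituting the representations $X = 1_D \cdot Z$ and $Y = 1_{D^c} \cdot Z$ and using associativity of the stochastic integral yields $N = N_0 + (K 1_D + L 1_{D^c}) \cdot Z$. Integrability of the combined integrand follows from
\begin{equation*}
\Ebb\bigl[(K 1_D + L 1_{D^c})^2 \cdot \aPP{Z}{Z}_\infty\bigr] = \Ebb\bigl[K^2 \cdot \aPP{X}{X}_\infty\bigr] + \Ebb\bigl[L^2 \cdot \aPP{Y}{Y}_\infty\bigr] < +\infty,
\end{equation*}
where the cross terms vanish by the singularity of the supports. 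This establishes the PRP for $Z$, closing the cycle. The only delicate step is the identification $X = 1_D \cdot Z$: it relies crucially on having the singularity expressed via a single predictable set, so that the brackets split additively along $D$ and $D^c$.
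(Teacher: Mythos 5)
Your proof is correct, and for the implication (ii)$\Rightarrow$(iii) it takes a slightly different route from the paper. Where the paper defines $G:=1_DU+1_{D^c}V$ and verifies directly, via the $L^2$-isometry, that $G\cdot Z$ and $U\cdot X+V\cdot Y$ are indistinguishable, you first isolate the structural identities $X=1_D\cdot Z$ and $Y=1_{D^c}\cdot Z$ (proved by computing $\aPP{X-1_D\cdot Z}{X-1_D\cdot Z}=0$ and noting that both martingales vanish at zero), and then obtain the representation by substituting these into the PRP for $\{X,Y\}$ and invoking associativity of the stochastic integral. The two presentations rest on exactly the same ingredients -- mutual singularity of the brackets, orthogonality of $X$ and $Y$, the isometry -- but your version makes explicit an intermediate fact (that $X$ and $Y$ are themselves stochastic integrals of $Z$ supported on complementary predictable sets) that the paper leaves implicit inside the final isometry computation. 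The implications (iii)$\Rightarrow$(i) and (i)$\Rightarrow$(ii) are argued exactly as in the paper, via the definition of multiplicity and the converse direction of Lemma \ref{lem:converse}, respectively. One small remark: as in the paper, you should note that since $\{X,Y\}$ has the PRP in the sense of Definition \ref{def:prp}, both $X$ and $Y$ lie in $\Hscr^2_{0,\mathrm{loc}}(\Fbb)$, which is what guarantees that the comparison $X-1_D\cdot Z$ vanishes at time zero.
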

\begin{proof}
We first observe that from (iii) we immediately get (i), by the definition of the multiplicity of a filtration. 

Next we show the implication (i)$\Rightarrow$(ii). If the multiplicity of $\Fbb$ is equal to one, then there exists an $N\in\Hscr\p2_\mathrm{loc}(\Fbb)$ possessing the PRP with respect to $\Fbb$. Hence, there exist $U,V\in\Lrm_\mathrm{loc}\p2(\Gbb,N)$ such that $X-X_0=U\cdot N$ and $Y-Y_0=V\cdot N$. So, (ii) follows from the orthogonality of $X$ and $Y$ and from Lemma \ref{lem:converse}, since we have $\aPP{X-X_0}{X-X_0}=\aPP{X}{X}$ and $\aPP{Y-Y_0}{Y-Y_0}=\aPP{Y}{Y}$ because of the properties of the predictable covariation. 

Finally, we verify the implication (ii)$\Rightarrow$(iii). By localization, we can assume $X,Y\in\Hscr\p2(\Fbb)$. Hence, $Z\in\Hscr\p2(\Fbb)$. Using the PRP of $\{X,Y\}$, we know that every $R\in\Hscr\p2(\Fbb)$ can be represented as
\begin{equation}\label{eq:rep.R}
R=R_0+U\cdot X+V\cdot Y,\quad U\in\Lrm\p2(\Fbb,X),\quad V\in\Lrm\p2(\Fbb,Y).
\end{equation}
Due to the assumption (ii), there exists $D\in\Pscr(\Fbb)$ such that $\aPP{X}{X}=1_D\cdot\aPP{X}{X}$ and $\aPP{Y}{Y}=1_{D\p c}\cdot\aPP{Y}{Y}$. We then define $G:=1_DU+1_{D\p c}V$ and get $G\in\Lrm\p2(\Gbb,Z)$. Indeed, we have $G\p2=1_DU\p2+1_{D\p c}V\p2$ and $\aPP{Z}{Z}=\aPP{X}{X}+\aPP{Y}{Y}$. Hence,
\[
\Ebb[G\p2\cdot\aPP{Z}{Z}_\infty]=\Ebb[U\p2\cdot\aPP{X}{X}_\infty]+\Ebb[V\p2\cdot\aPP{Y}{Y}_\infty]<+\infty.
\]
But now we observe that $G\cdot Z$ and $U\cdot X+V\cdot Y$ are indistinguishable. Indeed, using the isometry, the orthogonality of $X$ and $Y$, and the mutual singularity of $\aPP{X}{X}$ and $\aPP{Y}{Y}$, we get
\[
\Ebb\big[(G\cdot Z_\infty-U\cdot X_\infty-V\cdot Y_\infty)\p2\big]=0.
\]
From this latter identity it follows that the martingales $G\cdot Z$ and $U\cdot X+V\cdot Y$ are modifications of each other. Since they are both c\`adl\`ag, they are actually indistinguishable. Combining this with \eqref{eq:rep.R}, we get the PRP for $Z$. The proof of the theorem is complete.
\end{proof}
\subsection{The multiplicity of the progressively enlarged Brownian filtration}\label{subs:mul.bf}
We now apply the results of Subsection \ref{subs:gen.con} to study the multiplicity of a progressively enlarged Brownian filtration. We assume that $\Ws$ is an $\Fbb\p{\Ws}$-Brownian motion, i.e. a L\'evy process with characteristic triplet $(0,\sig\p2,0)$. It is then well-known that every $R\in\Hscr\p2(\Fbb\p{\Ws})$ has the representation
\[
R_t=R_0+U\cdot \Ws_t,\quad U\in\Lrm\p2(\Fbb\p{\Ws},\Ws).
\]  
The problem of the propagation of the martingale representation theorem to the progressively enlarged filtration $\Gbb$ in this special case has been investigated by Kusuoka in \cite{Ku99} for a finite-valued random time $\tau$ such that $\Fbb\p L$ is immersed in $\Gbb$ and that the $\Gbb$-predictable compensator $\Lm\p\Gbb$ of $\tau$ is of the form
\[
\Lm\p\Gbb_t=\int_0\p{t\wedge\tau}\lm_s\rmd s,\quad t\geq0,
\]
where $\lm$ is a nonnegative Lebesgue-integrable and, without loss of generality, a $\Fbb\p L$-predictable process. The next proposition shows that the result of Kusuoka (see \cite[Theorem 2.3]{Ku99}) is a special case of our main result, Theorem \ref{thm:PRP.den.hp}.

\begin{proposition}\label{prop:avoid.bf}
Let $\Ws$ be a Brownian motion and consider a random time $\tau$ such that $\Fbb\p{\Ws}$ is immersed in $\Gbb$. Then, the $\Gbb$-predictable compensator $\Lm\p\Gbb$ of $\tau$ is continuous if and only if $\tau$ avoids $\Fbb\p{\Ws}$-stopping times.
\end{proposition}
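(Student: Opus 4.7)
The plan is to handle the two directions separately. The implication $(\Ascr)\Rightarrow\Lambda\p\Gbb$ continuous is immediate from Theorem \ref{thm:G.com.AH}~(i), so I focus on the converse: assuming $(\Hscr)$ together with continuity of $\Lambda\p\Gbb$, I aim to derive $(\Ascr)$. By Theorem \ref{thm:con.Z}~(iii), this reduces to proving that the Az\'ema supermartingale $A$ is a.s.\ continuous.

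The first step exploits the Brownian character of $\Fbb\p{\Ws}$: by the PRP of $\Ws$ (Theorem \ref{thm:prp.lev} specialised to $\nu=0$), every $\Fbb\p{\Ws}$-martingale is continuous. Applied to the martingale $H\p o - H\p p$, which is of finite variation (as a difference of two increasing processes) and null at zero, this yields $H\p o = H\p p$ up to indistinguishability. Combining with Theorem \ref{thm:con.Z}~(ii), which under $(\Hscr)$ identifies $H\p o$ with $1-A$, I obtain $H\p p = 1-A$, so that $A$ itself is $\Fbb\p{\Ws}$-predictable.

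The second step is to rewrite \eqref{eq:G.com.gen} as $\Lambda\p\Gbb_t = -\int_0\p{\tau\wedge t} A_{s-}\p{-1}\,\rmd A_s$. Because $A_->0$ on $[0,\tau]$ by Theorem \ref{thm:nonneg.A}~(ii), the continuity of $\Lambda\p\Gbb$ translates into $\Delta A_t\,1_{\{t\le\tau\}}=0$ for every $t$ a.s., so $A$ has no jumps on $[0,\tau]$. The main obstacle will be upgrading this to full continuity of $A$. To do so, fix any $\Fbb\p{\Ws}$-predictable stopping time $\sig$ and set $F:=\{\Delta A_\sig\neq0,\ \sig<+\infty\}$; by predictability of $A$ and $\sig$, $F\in\Fscr\p{\Ws}_{\sig-}$, and the previous step forces $\sig>\tau$ a.s.\ on $F$. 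Hence $\{\tau\ge\sig\}\cap F=\emptyset$, and Lemma \ref{lem:prop.Z}~(ii) gives $A_{\sig-}=\Pbb[\tau\ge\sig\,|\,\Fscr\p{\Ws}_{\sig-}]=0$ a.s.\ on $F$. Since $A$ is non-negative and decreasing (Theorem \ref{thm:con.Z}~(i)), it follows that $A_\sig=0$ on $F$ as well, contradicting $\Delta A_\sig\neq0$. Thus $\Pbb[F]=0$, and since the jumps of the predictable c\`adl\`ag process $A$ are exhausted by graphs of such $\sig$, $A$ is a.s.\ continuous, which concludes the proof.
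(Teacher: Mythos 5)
Your proof is correct, and both directions go through, but for the substantial direction (continuity of $\Lm\p\Gbb$ $\Rightarrow(\Ascr)$) you take a genuinely different and considerably longer route than the paper. The paper's argument is a one-line computation at the level of $\Gbb$: since every $\Fbb\p{\Ws}$-optional process is $\Fbb\p{\Ws}$-predictable (Revuz--Yor, Cor.\ IV.5.7), the graph $[\et]$ of any finite $\Fbb\p{\Ws}$-stopping time $\et$ is $\Fbb\p{\Ws}$-predictable and hence $\Gbb$-predictable, so the defining property of the $\Gbb$-dual predictable projection applied to the integrand $1_{[\et]}$ gives $\Pbb[\tau=\et]=\Ebb[\Delta H_\et]=\Ebb[\Delta\Lm\p\Gbb_\et]=0$. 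Your argument instead works entirely inside $\Fbb\p{\Ws}$ through the Az\'ema supermartingale: you first use continuity of Brownian martingales to force $H\p o=H\p p$ (so that $H\p p=1-A$ by Theorem~\ref{thm:con.Z}(ii) and $A$ is $\Fbb\p{\Ws}$-predictable); then you rewrite \eqref{eq:G.com.gen} as $\Lm\p\Gbb=-\int_0\p{\tau\wedge\cdot}A_{s-}\p{-1}\,\rmd A_s$ and, since $A_->0$ on $[0,\tau]$, read off $\Delta A_t1_{\{t\le\tau\}}=0$; finally you rule out jumps of $A$ strictly after $\tau$ via Lemma~\ref{lem:prop.Z}(ii) and monotonicity of $A$ (on $F=\{\Delta A_\sig\neq0,\ \sig<\infty\}\in\Fscr\p{\Ws}_{\sig-}$ one gets $A_{\sig-}=\Pbb[\tau\ge\sig\,|\,\Fscr\p{\Ws}_{\sig-}]=0$, hence $A_\sig=0$ and $\Delta A_\sig=0$, a contradiction), and conclude by Theorem~\ref{thm:con.Z}(iii). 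Both proofs hinge on the same Brownian feature---the coincidence of the optional and predictable $\sig$-fields for $\Fbb\p{\Ws}$, equivalently the continuity of $\Fbb\p{\Ws}$-martingales---but the paper exploits it once, to choose a convenient $\Gbb$-predictable integrand, whereas you use it to establish structural properties of $A$ and its jumps. The paper's route is shorter and more direct; yours is more explicit about the jump structure of $A$ and fits naturally with the Az\'ema supermartingale machinery already developed in Section~\ref{sec:prog.en}.
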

\begin{proof}
Let $\Lm\p\Gbb$ be continuous. First, we observe that, since all $\Fbb\p {\Ws}$-martingales are continuous, from \cite[Corollary IV.5.7]{RY05}, every $\Fbb\p {\Ws}$-optional process is predictable. Let now $\et$ be a finite-valued $\Fbb\p {\Ws}$-stopping time. Then, $1_{[\et]}$ is an $\Fbb\p {\Ws}$-predictable process and hence $1_{[\et]}$ is $\Gbb$-predictable. 
As a consequence of the properties of the dual predictable projection, we get
\begin{equation}\label{eq:av.st.t.bm}
0=\Ebb\big[\Delta\Lm\p\Gbb_\et\big]=\Ebb\big[1_{[\et]}\cdot\Lm\p\Gbb_\infty\big]=\Ebb\big[1_{[\et]}\cdot H_\infty\big]=\Ebb\big[\Delta H_\et\big]=\Pbb\big[\tau=\et\big],
\end{equation}
for every finite-valued $\Fbb\p {\Ws}$-stopping time $\et$. From this it immediately follows that $\tau$ avoids $\Fbb\p {\Ws}$-stopping times. 

Conversely, we now assume that $\tau$ avoids $\Fbb\p {\Ws}$-stopping times. Then, the continuity of $\Lm\p\Gbb$ follows from \eqref{eq:G.com.gen} and and Lemma \ref{lem:Z.avoi} (ii). The proof is complete.
\end{proof}

Because of Theorem \ref{thm:PRP.den.hp}, we can consider the case of a random time $\tau$ satisfying hypothesis $(\Hscr)$ and such that the compensator $\Lm\p\Gbb$ is continuous and singular with  respect to the Lebesgue measure. To obtain  such a random time $\tau$, that by Proposition \ref{prop:avoid.bf} satisfies Assumption \ref{ass:en.fi}, one can proceed in a standard way following the \emph{Cox-construction} described in \cite[Section 6.5]{BR13}.

As an example, we can assume that $\Lm\p\Gbb$ is absolutely continuous with respect to the Devil's staircase on $\Rbb_+$, denoted by $C$ (in particular, $C_0=0$ and $C_\infty=\infty$). Since $\Lm\p\Gbb=(\Lm\p\Gbb)\p\tau$ and $C$ is a deterministic process, by \cite[Proposition I.3.13]{JS00} there exists a $\Gbb$-predictable nonnegative process $\kappa$ such that
\begin{equation}\label{eq:dev.rt}
\Lm\p\Gbb_t=\int_0\p{t\wedge\tau}\kappa_t\rmd C_t, \quad t\geq0.
\end{equation} 
Because of \cite[Lemme 4.4 (b)]{Jeu80} (see also \cite[Proposition 2.11 (b)]{AJ17}), one can assume, without loss of generality, that $\kappa$ is $\Fbb\p {\Ws}$-predictable.

\begin{theorem}\label{thm:mul.G}
Let $(\Ws,\Fbb\p{\Ws})$ be a Brownian motion. Denote by $\Gbb$ the progressive enlargement of $\Fbb\p{\Ws}$ by a random time $\tau$ that fulfils Assumption \ref{ass:en.fi}.
Then, the multiplicity of $\Gbb$ is equal to one if and only if $\Lm\p\Gbb$ is singular continuous on $\Pscr(\Gbb)$, that is, if and only if the $\Gbb$-martingale $Z:=\Ws+M$  possesses the \emph{PRP} with respect to $\Gbb$.
\end{theorem}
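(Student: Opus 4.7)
The plan is to reduce the statement to Theorem \ref{thm:prp.sum} applied in the filtration $\Gbb$ with $X=\Ws$ and $Y=M$. Its hypotheses are met: by Theorem \ref{theorem:orth.mart} (iii) and Theorem \ref{thm:PRP.den.hp} applied in the Brownian case (where $L\p2(\nu)=\{0\}$ forces $\Xscr_\Tscr=\{\Ws\}$), the processes $\Ws$ and $M$ lie in $\Hscr\p2_{\mathrm{loc}}(\Gbb)$, they are $\Gbb$-orthogonal, and the pair $\{\Ws,M\}$ possesses the PRP with respect to $\Gbb$. Theorem \ref{thm:prp.sum} then delivers at once the equivalence between the multiplicity of $\Gbb$ being one, the PRP of $Z=\Ws+M$ with respect to $\Gbb$, and the mutual singularity of $\aPP{\Ws}{\Ws}$ and $\aPP{M}{M}=\Lm\p{\Gbb}$ on $\Pscr(\Gbb)$. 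Under Assumption \ref{ass:en.fi}, $\Lm\p\Gbb$ is automatically continuous by Theorem \ref{thm:G.com.AH} (i), so only the identification of this mutual singularity with the pathwise singularity of $\Lm\p\Gbb$ with respect to the Lebesgue measure remains to be addressed.

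For the easy direction I would introduce on $(\Rbb_+\times\Om,\Pscr(\Gbb))$ the $\sig$-finite measure $m_\Ws(B):=\Ebb[1_B\cdot\aPP{\Ws}{\Ws}_\infty]=\sig\p2\,(\ell_+\otimes\Pbb)(B)$ (using $\aPP{\Ws}{\Ws}_t=\sig\p2\,t$) and the finite measure $m_M(B):=\Ebb[1_B\cdot\Lm\p\Gbb_\infty]$. Assuming mutual singularity, pick $D\in\Pscr(\Gbb)$ with $m_\Ws(D)=0$ and $m_M(D\p c)=0$: a Fubini argument produces, for $\Pbb$-a.e.\ $\om$, a Lebesgue-null section $D(\om)\subseteq\Rbb_+$ which carries $\rmd\Lm\p\Gbb(\om,\cdot)$, so that $\Lm\p\Gbb$ is pathwise Lebesgue-singular.

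For the converse I would invoke the Lebesgue decomposition of $m_M$ with respect to $m_\Ws$ on $\Pscr(\Gbb)$: this produces a $\Gbb$-predictable process $\ph\geq0$ and a set $D\in\Pscr(\Gbb)$ with $m_\Ws(D)=0$ such that $m_M=\ph\cdot m_\Ws+1_D\cdot m_M$. Testing against $1_{[0,t]}$ and invoking uniqueness of the $\Gbb$-predictable compensator, this identity transfers to the process level as
\[
\Lm\p\Gbb_t=\sig\p2\int_0\p t\ph_s\,\rmd s+\int_0\p t1_D(s)\,\rmd\Lm\p\Gbb_s\quad\textnormal{a.s., for every }\ t\geq0,
\]
where the first summand is pathwise absolutely continuous w.r.t.\ the Lebesgue measure and the second is pathwise carried by the Lebesgue-null section $D(\om)$. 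Pathwise uniqueness of the Lebesgue decomposition then forces the first summand to vanish a.s.\ whenever $\Lm\p\Gbb$ is pathwise Lebesgue-singular, hence $\ph=0$ $m_\Ws$-a.e.; consequently $\Lm\p\Gbb=1_D\cdot\Lm\p\Gbb$, which, combined with $\aPP{\Ws}{\Ws}=1_{D\p c}\cdot\aPP{\Ws}{\Ws}$, is the required mutual singularity. The main obstacle I foresee is precisely this converse step, namely upgrading the mere pathwise singularity of $\Lm\p\Gbb$ to the existence of a $\Gbb$-predictable separating set via the Lebesgue decomposition on $\Pscr(\Gbb)$; everything else is a direct appeal to Theorems \ref{thm:PRP.den.hp} and \ref{thm:prp.sum}.
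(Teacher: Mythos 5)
Your reduction to Theorem~\ref{thm:prp.sum} with $\Fbb=\Gbb$, $X=\Ws$ and $Y=M$, using Theorem~\ref{thm:G.com.AH}~(ii), Theorem~\ref{theorem:orth.mart} and Theorem~\ref{thm:PRP.den.hp}, is exactly the paper's proof; that part is correct and complete. The paper stops there, because the phrase ``$\Lm\p\Gbb$ is singular continuous on $\Pscr(\Gbb)$'' in the statement is meant in the sense of Definition~\ref{def:sing.meas}, that is, $\aPP{\Ws}{\Ws}$ and $\aPP{M}{M}=\Lm\p\Gbb$ are mutually singular on the predictable $\sig$-field (with continuity automatic from Assumption~\ref{ass:en.fi}); this is literally condition~(ii) of Theorem~\ref{thm:prp.sum}, and indeed Theorem~\ref{thm:more.gen} uses that same phrasing. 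So nothing beyond Theorem~\ref{thm:prp.sum} is needed.

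The second half of your proposal --- identifying mutual singularity on $\Pscr(\Gbb)$ with pathwise Lebesgue-singularity of $\Lm\p\Gbb$ --- is therefore not required for the theorem as stated, but it is a correct and genuinely useful supplement: it connects the abstract formulation of the statement with the concrete description (``continuous and singular with respect to the Lebesgue measure'') in the introduction and with the Devil's staircase example that follows the theorem, neither of which the paper formally reconciles with Definition~\ref{def:sing.meas}. Your Fubini argument gives the easy implication, and the converse via the Lebesgue decomposition of $m_M$ with respect to $m_\Ws$ on $\Pscr(\Gbb)$ is sound: the transfer to the process level through uniqueness of predictable increasing processes, the observation that the resulting decomposition is, for a.e.\ $\om$, a pathwise Lebesgue decomposition of $\rmd\Lm\p\Gbb(\om,\cdot)$, and the conclusion $\ph=0$ $m_\Ws$-a.e.\ all hold. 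In short: you reproduce the paper's argument and add a missing bridge that the paper takes for granted.
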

\begin{proof}
Theorem \ref{thm:G.com.AH} (ii) yields $M\in\Hscr\p2_0(\Gbb)$ and $\aPP{M}{M}=\Lm\p\Gbb$. Furthermore, Theorem \ref{theorem:orth.mart} applied to $L=\Ws$ implies that $\Ws$ is a $\Gbb$-Brownian motion such that $\aPP{\Ws}{\Ws}_t=\sig\p2 t$, $t\geq0$, and $\aPP{\Ws}{M}=0$. Finally, Theorem \ref{thm:PRP.den.hp} ensures that the family $\{\Ws,M\}$ has the PRP with respect to $\Gbb$. The claim now immediately follows from Theorem \ref{thm:prp.sum} applied to $\Fbb=\Gbb$, $X=\Ws$ and $Y=M$. The proof is complete.
\end{proof}

Using Theorem \ref{thm:PRP.den.hp.gen} instead of Theorem \ref{thm:PRP.den.hp}, we can show the following more general result. 
\begin{theorem}\label{thm:more.gen}
Let $\Fbb$ be an arbitrary filtration satisfying the usual conditions. Assume that $X\in\Hloc\p2(\Fbb)$ possesses the \emph{PRP} with respect to $\Fbb$. For a random time $\tau$ meeting Assumption \ref {ass:en.fi}, denote by $\Gbb$ the progressive enlargement of $\Fbb$ by $\tau$. 
Then, the multiplicity of $\Gbb$ is equal to one if and only if $\aPP{X}{X}$ and $\Lm\p\Gbb$ are mutually singular on $\Pscr(\Gbb)$, that is, if and only if the $\Gbb$-local martingale $Z:=X+M\in\Hscr\p2_{\mathrm{loc}}(\Gbb)$  possesses the \emph{PRP} with respect to $\Gbb$.
\end{theorem}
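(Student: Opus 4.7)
The plan is to reduce Theorem \ref{thm:more.gen} to a direct application of Theorem \ref{thm:prp.sum}, following the same pattern used in the proof of Theorem \ref{thm:mul.G}, but with the general-filtration Theorem \ref{thm:PRP.den.hp.gen} replacing the L\'evy-specific Theorem \ref{thm:PRP.den.hp}.

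The three ingredients I need to gather before invoking Theorem \ref{thm:prp.sum} (with $\Fbb$ replaced by $\Gbb$, $X$ kept as is, and $Y:=M$) are the following. First, by Theorem \ref{thm:G.com.AH} (ii) we have $M\in\Hscr\p2_0(\Gbb)$ and $\aPP{M}{M}=\Lm\p{\Gbb}$. Second, hypothesis $(\Hscr)$ in Assumption \ref{ass:en.fi} implies that any $\Fbb$-stopping time is a $\Gbb$-stopping time and that $\Fbb$-(local) martingales remain $\Gbb$-(local) martingales; consequently, any localizing sequence reducing $X$ to $\Hscr\p2(\Fbb)$ reduces it also to $\Hscr\p2(\Gbb)$, so that $X\in\Hloc\p2(\Gbb)$. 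Third, applying Theorem \ref{thm:PRP.den.hp.gen} to the singleton family $\Xscr=\{X\}$ gives, through its parts (i)-(iii), that $X$ and $M$ are orthogonal $\Gbb$-local martingales and that the family $\{X,M\}$ possesses the PRP with respect to $\Gbb$.

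Once these facts are in place, I would invoke Theorem \ref{thm:prp.sum} with $\Fbb:=\Gbb$, $X:=X$ and $Y:=M$. Its equivalence (i)$\Leftrightarrow$(ii)$\Leftrightarrow$(iii) then translates into: the multiplicity of $\Gbb$ equals one if and only if $\aPP{X}{X}$ and $\aPP{M}{M}=\Lm\p{\Gbb}$ are mutually singular on $\Pscr(\Gbb)$, if and only if $Z:=X+M$ possesses the PRP with respect to $\Gbb$. This is precisely the conclusion of Theorem \ref{thm:more.gen}.

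The only mildly delicate point, which I expect to be the main obstacle, is the invocation of Theorem \ref{thm:PRP.den.hp.gen}, whose natural statement lives in $\Hscr\p2(\Fbb)$, for an $X$ which is only \emph{locally} square integrable. This is handled by the standard localization procedure: choose a sequence $(\et_n)$ reducing $X$ to $\Hscr\p2(\Fbb)$, apply Theorem \ref{thm:PRP.den.hp.gen} to each $X\p{\et_n}$ on the correspondingly stopped filtration (noting that the $\Gbb$-compensator $\Lm\p{\Gbb}$ is integrable and $M$ is already in $\Hscr\p2_0(\Gbb)$), and then patch the resulting predictable integrands in the usual way to represent elements of $\Hscr\p2(\Gbb)$, respectively $\Hloc\p2(\Gbb)$. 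After this cosmetic step, the rest of the argument is a routine re-run of the proof of Theorem \ref{thm:mul.G}.
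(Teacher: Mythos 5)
Your proposal is correct and follows essentially the same route as the paper: gather $M\in\Hscr\p2_0(\Gbb)$ and $\aPP{M}{M}=\Lm\p\Gbb$ from Theorem \ref{thm:G.com.AH}~(ii), use Theorem \ref{thm:PRP.den.hp.gen} to get that $\{X,M\}$ is an orthogonal family with the PRP in $\Gbb$, and then invoke Theorem \ref{thm:prp.sum} with $Y=M$. Your extra remarks on localization (passing $X\in\Hloc\p2(\Fbb)$ into $\Hloc\p2(\Gbb)$ under $(\Hscr)$, and reconciling the $\Hscr\p2$ hypothesis of Theorem \ref{thm:PRP.den.hp.gen} with the merely local integrability of $X$) are a detail the paper leaves implicit, aside from its footnote to Theorem \ref{thm:PRP.den.hp.gen}; the paper instead briefly notes that immersion leaves $\aPP{X}{X}$ unchanged in $\Gbb$, a point you use implicitly.
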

\begin{proof}
Theorem \ref{thm:G.com.AH} (ii) yields $M\in\Hscr\p2_0(\Gbb)$ and $\aPP{M}{M}=\Lm\p\Gbb$. The immersion property implies that $\aPP{X}{X}$ does not change in $\Gbb$. Thanks to Theorem  \ref{thm:PRP.den.hp.gen}, $\{X,M\}\subseteq\Hscr\p2_\mathrm{loc}(\Gbb)$ is a family of orthogonal $\Gbb$-local martingales possessing the PRP with respect to $\Gbb$. The claim now immediately follows from Theorem \ref{thm:prp.sum} applied to the filtration $\Gbb$ and with $Y=M$. The proof is complete.
\end{proof}
We stress that, if $\Fbb$ is more general than a Brownian filtration and supports martingales with jumps, then the continuity of $\Lm\p \Gbb$ does not imply hypothesis $(\Ascr)$. However, to construct a random time $\tau$ satisfying the assumptions of Theorem \ref{thm:more.gen}, we can follow the Cox-method (see Remark \ref{rem:Cox.con}). That is, also in this more general context, there exist random times $\tau$ satisfying Assumption \ref{ass:en.fi} and such that $\Lm\p\Gbb$ can be chosen singular on $\Pscr(\Gbb)$. For example, if $\aPP{X}{X}$ is absolutely continuous with respect to the Lebesgue measure, we can consider $\tau$ such that the compensator $\Lm\p\Gbb$ is given as in \eqref{eq:dev.rt} with an $\Fbb$-predictable density $\kappa$.
  
\begin{example}[An application of Theorem \ref{thm:more.gen}]\label{ex.app.th.ch} As an application of Theorem \ref{thm:more.gen}, we can consider the following example: Let $W$ be a standard Brownian motion and let $C$ denote the Devil's staircase on $\Rbb_+$. Since $C$ is increasing and continuous, $C$ is a \emph{continuous time change}. We define the process $X=(X_t)_{t\geq0}$ by $X_t=W_{C_t}$, $t\geq0$. Then,  by the properties of continuous time changes, $(X,\Fbb\p X)$ is a continuous local martingale and $\aPP{X}{X}_t=C_t$, $t\ge0$. From \cite[Theorem II.4.4]{JS00} we obtain that $X$ is  a non-homogeneous Brownian motion with respect to $\Fbb=\Fbb\p X$. If we now enlarge $\Fbb\p X$ to $\Gbb$ by a random time $\tau$ satisfying Assumption \ref{ass:en.fi} and such that $\Lm\p\Gbb$ is \emph{absolutely continuous} with respect to the Lebesgue measure, then Theorem \ref{thm:more.gen} implies that $Z=X+M$ has the PRP with respect to $\Gbb$. Therefore, the multiplicity of $\Gbb$ is equal to one. We can proceed in a similar way if, instead of $W$, we start with a homogeneous Poisson process $N$ and define $X_t=N_{C_t}-C_t$. More generally, outside of the context of processes with independent increments, we can also consider a point process $N$ such that the predictable compensator $N\p p$ is absolutely continuous with respect to the Lebesgue measure and set $X_t=N_{C_t}-N\p p_{C_t}$.
\end{example}
\paragraph*{Acknowledgement.} PDT gratefully acknowledges Martin Keller-Ressel and funding from the German Research Foundation (DFG) under grant ZUK 64.  

\end{document}